\newtheorem{corollary}{Corollary}[section]
\newtheorem{theorem}{Theorem}[section]
\newtheorem{definition}{Definition}[section]
\newtheorem{lemma}{Lemma}[section]
\newtheorem{proposition}{Proposition}[section]
\newtheorem*{remark}{Remark}
\newtheorem{conjecture}{Conjecture}
\title{Guessing Strategies for Shelf-Shuffling Machines}
\author[Alexander Clay]{Alexander Clay}
\date{June 2025}
\begin{document}

\thispagestyle{empty}

\begin{abstract}
    We investigate a one-time single shelf shuffle by establishing the position matrix explicitly. In some cases, we prove a no-feedback optimal guessing strategy. A general no-feedback strategy is conjectured, and asymptotics for the expected reward are given. For the complete-feedback case, we give a guessing strategy, prove that it is optimal and unique, and find the expected reward. Our results prove a conjecture of Diaconis, Fulman, and Holmes in a special case. 
\end{abstract}
\maketitle
\section{Introduction and Main Results}
\label{Section1}
Shuffling machines play a vital role in the theory of card guessing. They have practical applications to casino gambling and manufacturing, and theoretical connections to Lie groups, dynamical systems, and the RSK algorithm. A standard shuffling machine employs ten shelves. In a shuffling machine, cards are drawn from the top of the deck. Each card is placed, uniformly at random, on a shelf. The decision to place the card on the top or bottom of the existing pile on the shelf is done randomly. The shelves are then removed and the cards are pressed together, forming a shuffled deck. This setup is standard across most casinos, so studying it is crucial to gambling strategies. Observers would question how randomized the cards are after a shuffle. Gamblers attempt to gain advantages by correctly guessing the order of cards that appear. A natural test for randomness is the expected number of correct guesses, with varying levels of feedback.

The theory of optimal guessing strategies has seen rapid development over the past few years. The setup is simple. A collection of identical labeled objects is mixed. An observer, who cannot see the labeling, models the mixing using a probability distribution. Then, the observer goes through the objects and attempts to guess which labeled object is in each place. Frequently, mathematicians use the language of card-shuffling to describe this. The labeled objects are referred to as cards, and the cards are face down. The order of the cards is known to the player before they are shuffled. The cards are then shuffled, and the player attempts to guess which card ended up in each place. Different situations occur with regards to the feedback the player receives. Diaconis and Graham in \cite{Diaconis1981} classified the feedback a player may receive and provided algorithms to analyze each type. 

The player may receive no information after each guess. This is the no-feedback case. Ciucu studied no-feedback card guessing for dovetail shuffles in \cite{Ciucu}. He computed formulas for the higher powers of the position matrix by diagonalizing it. Using analytical techniques, he proved that in a deck of $2n$ cards given $s\geq 2\log(2n)+1$ shuffles, the best strategy is to guess card $1$ for the first $n$ cards and card $2n$ for the last $n$ cards. He also showed that the $2\log(2n)+1$ bound is asymptotically sharp with regards to the strategy above.

The player may instead receive partial feedback after each guess, such as a yes/no answer. In many cases, including riffle shuffles, problems in this area are open. Recent work by Diaconis et. al. in \cite{CardGuessingWithPartialFeedback} and \cite{GuessingAboutGuessing} studies the case when $m$ copies of $n$ different card types are shuffled uniformly. In the former, they show that under an optimal strategy, uniformly in $n$, the expected number of correct guesses is at most $m+O(m^{3/4}\log m)$. In the latter, more practical strategies to real-life card games are discussed. These papers answered questions posed by Diaconis and Graham in 1981 \cite{Diaconis1981}. The bound in \cite{CardGuessingWithPartialFeedback} can be improved to $m+O(\sqrt{m})$ when $n=\Omega(\sqrt{m})$ as shown by Ziepi Nie in \cite{ZiepiNie}.

Lastly, the player may receive complete feedback, i.e. the card is turned face up. The $n$ cards of $m$ copies shuffled uniformly case has been studied by Ottolini and Steinerberger in \cite{OttoliniSteinerberger} and by He and Ottolini in \cite{HeOttolini}. Pengda Liu studied complete feedback guessing in the context of a riffle shuffle in \cite{Liu2020}. 

We apply some of the techniques discussed by Ciucu to shelf-shuffles in the no-feedback and complete-feedback cases. Fulman, Diaconis, and Holmes examined shelf-shuffles in detail in \cite[FDH]{FDH}. See their introduction for a thorough description of how casinos use shelf-shuffling machines, and for a description of shelf shuffles in general. They found a closed formula for the probability that a shelf shuffler with a given number of shelves and cards outputs a given permutation. In Section $\ref{Section2}$, we describe shelf shuffles, and we prove that the position matrix for a shuffle with one shelf is given explicitly as follows.

\begin{theorem}
\label{positionmatrix}
    Suppose a deck of $n$ cards is shuffled once in a shelf-shuffler with one shelf. Let $p_{i,j}$ be the probability that card $i$ lands in position $j$. Then, we have 
    \[p_{i,j}=\frac{\binom{i-1}{j-1}+\binom{i-1}{n-j}}{2^i},\;\;\;1\leq i\leq n,\;\;\;1\leq j\leq n\]
with the convention that $\binom{0}{0}=1$ and $\binom{0}{k}=0$ for all positive integers $k$. In particular, $p_{i,j}=0$ if and only if $i+1\leq j\leq n-i$, and $p_{i,n-j+1}=p_{i,j}$ for all $i,j$.  
\end{theorem}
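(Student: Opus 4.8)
The plan is to track the final position of a fixed card $i$ as a deterministic function of the independent fair coin flips that drive the shuffle, and then to sum the probabilities of all flip-sequences that send card $i$ to a prescribed position $j$. First I would isolate the two sources of randomness that actually move card $i$: the single top-or-bottom choice made when card $i$ itself is placed on the shelf, and the top-or-bottom choices of the cards placed \emph{after} card $i$. The decisive observation is that the cards placed \emph{before} card $i$ never interleave with it — at the instant card $i$ is dropped they already form one contiguous pile, and every subsequent insertion lands strictly above or strictly below the whole current pile — so those earlier cards influence the final position of card $i$ only through their number and through whether card $i$ was dropped on top of them or beneath them. In the convention of Section~\ref{Section2} there are exactly $i-1$ cards placed after card $i$ and $n-i$ placed before it (equivalently, card $i$ is the $(n-i+1)$-st card to reach the shelf).

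Next I would condition on card $i$'s own placement. Writing $a$ for the number of the $i-1$ later cards that are dropped on top, $a$ is $\mathrm{Binomial}(i-1,\tfrac12)$ since the choices are independent fair coins, and the number of later cards finishing above card $i$ is exactly $a$. If card $i$ was placed on top of the existing pile (probability $\tfrac12$) then no earlier card lies above it and its final position is $a+1$; if it was placed on the bottom (probability $\tfrac12$) then all $n-i$ earlier cards lie above it and its position is $(n-i)+a+1$. Setting each expression equal to $j$ and recording the forced value of $a$ gives
\[
p_{i,j}=\tfrac12\cdot\frac{1}{2^{\,i-1}}\binom{i-1}{\,j-1\,}+\tfrac12\cdot\frac{1}{2^{\,i-1}}\binom{i-1}{\,j-(n-i)-1\,}=\frac{1}{2^{\,i}}\left[\binom{i-1}{j-1}+\binom{i-1}{j-n+i-1}\right].
\]
Applying $\binom{m}{k}=\binom{m}{m-k}$ to the second term with $m=i-1$ and $k=j-n+i-1$ turns it into $\binom{i-1}{n-j}$ and produces the claimed symmetric formula; the conventions $\binom{0}{0}=1,\ \binom{0}{k}=0$ are precisely what make the boundary cards $i=1$ and $i=n$ come out correctly.

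The two ``in particular'' assertions then fall out of the closed form. The reflection identity $p_{i,n-j+1}=p_{i,j}$ is immediate, since substituting $n-j+1$ for $j$ merely interchanges the two binomial coefficients. For the support statement, both binomials being nonnegative forces $p_{i,j}=0$ if and only if $\binom{i-1}{j-1}=0$ and $\binom{i-1}{n-j}=0$; the first vanishes exactly when $j\geq i+1$ and the second exactly when $j\leq n-i$, so $p_{i,j}=0$ iff $i+1\leq j\leq n-i$.

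The step I expect to be the only one requiring genuine care is justifying the block behavior of the earlier cards together with the bookkeeping of card $i$'s own coin flip, especially at the extremes. For the first card placed there is no earlier block, so its ``top'' and ``bottom'' options coincide, and one must check that the two (then equal) binomial terms reproduce the correct $\mathrm{Binomial}(n-1,\tfrac12)$ law rather than double counting; for the last card placed one must check the degenerate $a=0$ case. A total-mass verification, $\sum_{j}p_{i,j}=\frac{1}{2^{i}}\left(2^{\,i-1}+2^{\,i-1}\right)=1$, is a useful guard against an off-by-one slip in the index $j-n+i-1$.
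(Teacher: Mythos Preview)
Your argument is correct and is essentially the paper's own proof: condition on card $i$'s top/bottom placement, then count how many of the $i-1$ later cards must land above (or below) it, giving the two binomial terms. You supply more detail than the paper does---in particular you spell out the block behavior of the earlier $n-i$ cards, handle the $i=n$ edge case, and verify the support and symmetry claims---but the core idea and decomposition are identical.
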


The one-shelf case is a simple shuffle to describe, but the computations associated with it are difficult. We prove an optimal guessing strategy without feedback for a few positions. By optimal, we mean that the guessing strategy should maximize the expected number of correct guesses. In \cite{Ciucu}, Ciucu discusses that, with no feedback, a best guessing strategy is given by taking the argmax of each column in the position matrix. Then the player guesses the resulting ordered sequence of cards. 

\begin{theorem}
\label{no feedback guessing strategy}
    Suppose a deck of $n$ cards is given a shelf shuffle with a single shelf. The player should follow the following (incomplete) guessing strategy. Card $1$ should be guessed in positions $1$, and $n$ and cards $2$ or $3$ should be guessed in positions $2$ and $n-1$. If $n$ is even, card $n-1$ or card $n$ should be guessed in positions $\frac{n}{2}$ and $\frac{n}{2}+1$. If $n$ is odd, card $n-1$ or card $n$ should be guessed in position $\frac{n+1}{2}$. 
\end{theorem}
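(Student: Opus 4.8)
The strategy rests on the principle, attributed to Ciucu above, that an optimal no-feedback guess in position $j$ is any card $i$ attaining $\max_i p_{i,j}$; so the entire task reduces to locating the $\arg\max$ of each named column of the matrix from Theorem \ref{positionmatrix}. The plan is to exploit the reflection symmetry $p_{i,n-j+1}=p_{i,j}$ to halve the work: positions $1$ and $n$ share a column profile, as do positions $2$ and $n-1$, and for even $n$ the two central columns $n/2$ and $n/2+1$ are identical. Thus it suffices to analyze three columns, $j=1$, $j=2$, and the central column, in each case reading off the maximizing row.

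For $j=1$ I would observe that $\binom{i-1}{n-1}=0$ whenever $i\le n-1$, so $p_{i,1}=2^{-i}$ on that range and is strictly decreasing, with maximum $1/2$ at $i=1$; a single check that the boundary entry $p_{n,1}=2^{-(n-1)}$ is smaller then pins the $\arg\max$ at card $1$. For $j=2$, the same vanishing gives $p_{i,2}=(i-1)2^{-i}$ for $i\le n-2$, and the consecutive-ratio test $p_{i+1,2}/p_{i,2}=\tfrac{i}{2(i-1)}$ shows this is unimodal with twin maxima $1/4$ at $i=2$ and $i=3$, yielding the claimed ``card $2$ or $3$.''

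For the central column the key simplification is Pascal's rule. When $n$ is even and $j=n/2$ the two binomials combine, $\binom{i-1}{n/2-1}+\binom{i-1}{n/2}=\binom{i}{n/2}$, so $p_{i,n/2}=\binom{i}{n/2}2^{-i}$; when $n$ is odd and $j=(n+1)/2$ the two terms coincide, giving $p_{i,(n+1)/2}=\binom{i-1}{(n-1)/2}2^{-(i-1)}$. In each case I would run the consecutive-ratio test, which takes the clean form $\tfrac12\cdot\tfrac{i+1}{i+1-k}$ (or its shift) with $k$ the relevant lower index; this exceeds $1$ exactly while $i\le n-1$ and equals $1$ precisely at the top step. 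Hence the column is strictly increasing up to a terminal tie $p_{n-1,j}=p_{n,j}$, so the $\arg\max$ is $\{n-1,n\}$, matching the statement.

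The main obstacle is the boundary behavior near $i=n$ in the columns $j=1,2$, where the second binomial $\binom{i-1}{n-j}$ switches on and must be shown not to overtake the interior maximum. For $j=2$ this requires the comparison $(n-1)2^{-(n-1)}\le\tfrac14$, which holds for all large $n$ but degenerates for small $n$ --- precisely the regime in which the named positions $2$, $n-1$ collide with the central positions. I would therefore close the argument by treating these few small values of $n$ by direct inspection of the matrix, while the monotonicity arguments above dispatch all remaining $n$.
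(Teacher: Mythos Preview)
Your proposal is correct and follows essentially the same route as the paper: reduce via the reflection symmetry to three columns, handle $j=1$, $j=2$, and the central column(s) separately, use Pascal's rule plus the consecutive-ratio test for the middle, and dispose of small $n$ by inspection. The only cosmetic difference is at $j=1$, where the paper invokes the doubly-stochastic column sum and pigeonhole (since $p_{1,1}=1/2$ and some other entry is positive, no other entry can reach $1/2$) rather than your direct computation $p_{i,1}=2^{-i}$; both arguments are immediate.
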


We conjecture an optimal no-feedback guessing strategy for the remaining cases. Roughly speaking, the player should guess card $2i-1$ in position $i$ until the middle of the deck is reached. Then, the player should guess his first $n/2$ guesses in reverse. For example, in an $8$ card deck, the player would guess cards $1,3,5,7,7,5,3,1$. The full strategy can be found in the appendix $\ref{Appendix}$.

Using the method of indicators, one can show that the expected number of correct guesses, without feedback, is the sum of the maxima of each column of the position matrix. We prove the following asymptotic as the deck size $n$ tends to infinity for the number of correct guesses using the conjectured strategy.

\begin{theorem} 
\label{No-feedback Asymptotic}
Suppose a deck of $n$ cards is given a shelf shuffle with a single shelf. Let $E(n)$ be the expected number of correct guesses in a deck of $n$ cards using the conjectured best strategy with no feedback. We have $E(n)\sim \sqrt{\frac{2}{\pi}}\sqrt{n-\sqrt{n}}+2\Phi(1)-1+C_n$ as $n\to\infty$, where $C_n\lesssim e^{-2}\cdot (2\pi)^{-1/2}\cdot (n- \sqrt{n})^{1/2}$. Here, $\Phi$ is the c.d.f. of an $\mathcal{N}(0,1)$ random variable.
\end{theorem}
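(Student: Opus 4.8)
The plan is to compute the expected number of correct guesses $E(n)$ as the sum over all columns $j$ of $\max_i p_{i,j}$, where $p_{i,j}$ is the position matrix from Theorem \ref{positionmatrix}. Under the conjectured strategy, the guess in position $j$ is card $2j-1$ for $j$ in the first half of the deck (and the mirror-image card in the second half, which by the symmetry $p_{i,n-j+1}=p_{i,j}$ contributes identically). So my first step is to reduce $E(n)$ to roughly $2\sum_{j} p_{2j-1,\,j}$ over the first half of the positions, using the symmetry to fold the two halves together and being careful about the middle column(s) depending on the parity of $n$. This turns the problem into estimating a single sum of binomial-type probabilities along the ``diagonal'' $i=2j-1$.

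Next I would get an explicit handle on the summand. Along the line $i=2j-1$, the relevant term is dominated by $\binom{2j-2}{j-1}/2^{2j-1}$ (the central binomial coefficient), since the other binomial $\binom{i-1}{n-j}$ vanishes or is negligible away from the extreme columns. By Stirling's approximation, $\binom{2m}{m}/2^{2m}\sim 1/\sqrt{\pi m}$, so each diagonal term behaves like a constant times $j^{-1/2}$. The plan is therefore to write $E(n)\approx \sqrt{2/\pi}\sum_{j} j^{-1/2}$ over the appropriate range and then approximate the sum by an integral: $\sum_{j\le M} j^{-1/2}\approx 2\sqrt{M}$, which is exactly where the leading $\sqrt{2/\pi}\,\sqrt{n-\sqrt n}$ term should emerge once the correct upper limit of summation (around $(n-\sqrt n)/\,?$, determined by where the conjectured diagonal strategy actually applies) is substituted.

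The delicate part is handling the boundary of the sum carefully enough to extract the lower-order constant $2\Phi(1)-1$ and to control the error term $C_n$. The few smallest-index columns (positions $1,2,\dots$) do not follow the pure $j^{-1/2}$ asymptotic cleanly — they involve both binomial terms and were in fact pinned down exactly in Theorem \ref{no feedback guessing strategy} — so I expect the constant $2\Phi(1)-1$ to arise from a more precise Euler–Maclaurin-type correction to the integral approximation, where the normal c.d.f. $\Phi$ enters through the Gaussian tail/continuity correction implicit in replacing the discrete binomial maxima by their Stirling integrals. Concretely, I would use Euler–Maclaurin to write $\sum j^{-1/2}=2\sqrt M+\zeta(1/2)+\text{(boundary terms)}$ and then identify how the boundary terms combine with the Gaussian approximation to produce $2\Phi(1)-1$.

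The main obstacle I anticipate is precisely this bookkeeping at the transition region: justifying that $\max_i p_{i,j}$ is genuinely attained at $i=2j-1$ (rather than a neighboring row) uniformly across the relevant range, and then controlling the replacement of the discrete central-binomial values by their continuous Stirling approximations sharply enough that the accumulated error is $C_n\lesssim e^{-2}(2\pi)^{-1/2}(n-\sqrt n)^{1/2}$. This requires a uniform, non-asymptotic Stirling bound (with explicit error constants) summed against the $j^{-1/2}$ weights, and the $e^{-2}$ factor strongly suggests that the dominant error contribution comes from a Gaussian-tail estimate evaluated at a specific point (where an exponent equals $-2$); pinning down exactly which term produces this and showing all other contributions are lower order will be the technical heart of the argument.
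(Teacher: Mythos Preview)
Your decomposition misses the key structural feature of the conjectured strategy: it does \emph{not} guess card $2j-1$ all the way to the middle. For roughly the last $m\approx\tfrac{1}{2}\sqrt{n}$ positions before $n/2$, the optimal guess switches to card $n$ (or $n-1$), and the paper splits $E(n)$ accordingly into three pieces $S_1+S_2+S_3$. The diagonal sum $S_1=2\sum_{j\le n/2-m-1}\binom{2j-2}{j-1}/2^{2j-1}$ is what you identified, and it does produce the leading $\sqrt{2/\pi}\,(n-\sqrt n)^{1/2}$ (the paper in fact evaluates it via a closed-form identity rather than termwise Stirling plus integration). But the constant $2\Phi(1)-1$ has nothing to do with Euler--Maclaurin corrections or small-$j$ boundary terms: it is the value of the separate sum $S_3=2\sum_{j=n/2-m}^{n/2}\binom{n-1}{j-1}/2^{n-1}$, which is a Binomial$(n-1,1/2)$ probability over a window of width $\sim\sqrt{n}$ around the mean and converges to $2\Phi(1)-1$ by de~Moivre--Laplace. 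Without the strategy switch you never see this sum.

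Similarly, the error $C_n$ with the $e^{-2}$ factor is not the accumulated Stirling error. It is the contribution $S_2=2\sum_{j\le n/2-m-1}\binom{2j-2}{n-j}/2^{2j-1}$ of the \emph{second} binomial coefficient in $p_{2j-1,j}$, which you dismissed as negligible. It is not: this term is nonzero for $j>n/3$, its maximum occurs at the upper endpoint $j=n/2-m-1$, and applying de~Moivre--Laplace there (the standardized argument tends to $2$, whence $e^{-x^2/2}=e^{-2}$) times the $O(n)$ many summands gives exactly the bound $e^{-2}(2\pi)^{-1/2}(n-\sqrt n)^{1/2}$. So both secondary terms in the theorem come from pieces your outline does not isolate; the Euler--Maclaurin/continuity-correction route will not recover them.
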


While the no-feedback case is difficult to work with, the full feedback case has much nicer results. Liu discussed a complete-feedback guessing strategy in \cite{Liu2020} for a one-time riffle shuffle. We draw on his method for inspiration. Aldous and Diaconis applied stopping times to card shuffling in \cite{ShufflingAndStoppingTimes} from a group-theoretic perspective. To understand the shelf-shuffle case, we analyze the first descent in the permutation of the cards and, roughly speaking, associate a stopping time to it. Our stopping time is defined purely stochastically. Using this, we prove a unique optimal guessing strategy, with complete feedback. The process is defined in Section $3$, and we can also state it in the following words.

\begin{theorem}
\label{complete feedback optimal strategy}
    Suppose a deck of $n$ cards is given a shelf shuffle with a single shelf, and the player is given complete feedback. The player should guess card $1$ in position $1$, and the card immediately following the previously shown card in all other positions. Once one of card $n-1$ or $n$ is first shown, the player should guess the remaining cards in descending order, and is guaranteed to get these correct.
\end{theorem}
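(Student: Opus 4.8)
The plan is to first record the combinatorial structure hidden in Theorem~\ref{positionmatrix}. Building the output pile by inserting the cards in the order $n, n-1, \ldots, 1$, each placed on the top or bottom of the current pile by an independent fair coin, produces a \emph{unimodal} permutation. Concretely, let $S\subseteq\{1,\ldots,n-1\}$ be the random set of cards placed above the peak $n$; then reading the pile from top to bottom yields the elements of $S$ in increasing order, then $n$, then the elements of $\{1,\ldots,n-1\}\setminus S$ in decreasing order, and each $k<n$ lies in $S$ independently with probability $1/2$. This is exactly the event structure behind the $\binom{i-1}{j-1}+\binom{i-1}{n-j}$ count, and it is the description I would use throughout. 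The peak is the location of the first descent, so I would frame the two phases of the strategy around the stopping time $\tau$ equal to the first position at which a revealed card lies in $\{n-1,n\}$, i.e.\ the step at which the first descent is detected.

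\textbf{Step 2: myopic optimality under complete feedback.} Because the cards revealed at positions $1,\ldots,j-1$ form a prefix that does not depend on the player's guesses, the expected number of correct guesses equals $\sum_j \Pr[\mathrm{guess}_j=\mathrm{card}_j]$, and each summand is maximized \emph{independently} by guessing a card of maximal conditional probability given the revealed prefix $H_{j-1}$. I would state this as a lemma in the spirit of Liu~\cite{Liu2020}; the optimal strategy is then the conditional mode at every position, and it is unique exactly when that mode is unique.

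\textbf{Step 3: the increasing phase.} This is the computational heart. Before $\tau$ the revealed prefix is an increasing run $a_1<\cdots<a_{j-1}$, and from it one reads off the coins $X_1,\ldots,X_{a_{j-1}}$ exactly: a card $k\le a_{j-1}$ lies in $S$ if and only if it has already appeared, while the remaining coins stay i.i.d.\ fair. Hence the card in position $j$ is $\min\{k>a_{j-1}:k\in S\}$ (or $n$ if no such $k$), giving $\Pr[\mathrm{card}_j=a_{j-1}+\ell\mid H_{j-1}]=2^{-\ell}$ for $\ell\ge 1$ with the residual mass at $n$. The conditional mode is therefore $a_{j-1}+1$, the ``card immediately following the previously shown card''; at position $1$ the same computation gives card $1$ as the strict mode with probability $1/2$, matching the stated rule.

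\textbf{Step 4: the descending phase, guaranteed correctness, and the obstacle.} Once $\tau$ is reached I would show the remaining cards must appear in strictly decreasing order: if $n$ is revealed first, the unrevealed cards are exactly $\{1,\ldots,n-1\}\setminus S$, sitting below $n$ in decreasing order; if $n-1$ is revealed first then $n-1=\max S$, so it is immediately followed by $n$ and then the decreasing run, and the unrevealed set $\{n\}\cup(\{1,\ldots,n-1\}\setminus S)$ again occurs in strictly decreasing order. In either case the unrevealed set is determined by the prefix and its order is forced, so the conditional mode is the largest unrevealed card with probability $1$, which proves both optimality of the descending phase and the ``guaranteed to get these correct'' assertion; combining Steps~2--4 shows the displayed strategy attains the maximal expected reward. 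The delicate points are precisely this conditional computation near the peak and the uniqueness claim: the mode is strict at every reachable history except the single boundary case where the previously shown card is $n-2$, where $\Pr[\mathrm{card}_j=n-1]=\Pr[\mathrm{card}_j=n]=1/2$. I would verify that guessing $n-1$ (as the stated rule does) and guessing $n$ lead to the same continuation and hence the same total reward, so the strategy is optimal and unique up to this harmless tie; establishing that this is the \emph{only} tie is the crux of the uniqueness argument.
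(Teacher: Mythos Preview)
Your proposal is correct and follows essentially the same approach as the paper: both identify the unimodal structure of the output, define the stopping time $\tau$ as the first appearance of card $n-1$ or $n$, compute the conditional next-card distribution as geometric with mode $r_{j-1}+1$ (probability $1/2$) before $\tau$, and observe that the tail is deterministic after $\tau$. Your framing via the random set $S$ of cards above the peak is a slightly more transparent packaging of the paper's recursive Markov argument in Proposition~\ref{properties of r}, and your explicit treatment of the unique tie at $r_{j-1}=n-2$ matches the paper's handling in Proposition~\ref{best strategy with feedback}.
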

    For example, if $n=20$, say, and the first cards that come out are, in order, $1,\,2,\,5,\,10,\,11,\,19$, the player will have guessed cards $1,2,3,6,11,12$ and have gotten cards $1,\,2,\,11$ correct. After this, the player will guess cards $20,18,17,16,15,14,13,12,9,8,7,6,4,3$ and get them, almost surely, all correct. 

This strategy agrees with the strategy conjectured in \cite[FDH]{FDH} for one shelf. The authors ran Monte-Carlo simulations, and the number of correct guesses in a $52$-card deck using this strategy averaged $39$. We show that the position of the first descent is a shifted binomial random variable and prove the following expected value.

\begin{theorem}
\label{complete feedback expected reward}
    Suppose a deck of $n$ cards is given a shelf shuffle with a single shelf. Using the full-feedback optimal strategy defined in Theorem $\ref{complete feedback optimal strategy}$, the expected number of correct guesses is $3n/4$.
\end{theorem}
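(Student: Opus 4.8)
The plan is to exploit the explicit combinatorial description of one-shelf permutations and reduce the (adaptive) reward to a sum of simple indicators. First I would record the structural fact underlying Theorem~\ref{positionmatrix}: a single shelf shuffle produces exactly the unimodal permutations whose unique peak is card $n$, obtained by deciding independently, for each card $i \in \{1,\dots,n-1\}$ with probability $1/2$, whether $i$ joins the ascending run (placed above card $n$) or the descending run (below it). Writing $t_i = \mathbf{1}[i \text{ lies in the ascending run}]$, these are i.i.d.\ $\mathrm{Bernoulli}(1/2)$, the ascending run is the set $T = \{i : t_i = 1\}$ listed in increasing order, and the descending run $B = \{1,\dots,n-1\}\setminus T$ is listed in decreasing order after the peak. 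The peak occupies position $|T|+1$, which is the shifted binomial promised just before the statement and is the location of the first descent.

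Next I would split the reward along the two phases of the strategy of Theorem~\ref{complete feedback optimal strategy}. In the ascending phase the player guesses card $1$ in position $1$ and the successor $\pi(j-1)+1$ thereafter, so a guess is correct in position $1$ exactly when $t_1 = 1$, and in position $j \ge 2$ exactly when $\pi(j) = \pi(j-1)+1$, i.e.\ when consecutive elements of the ascending run differ by $1$. Since the run is $T$ sorted increasingly, this yields the clean count $X_{\mathrm{asc}} = t_1 + \sum_{v=2}^{n-1} t_v t_{v-1}$, the event $t_v t_{v-1}=1$ recording that $v-1$ immediately precedes $v$ in the run. Once card $n-1$ or $n$ is first seen the player switches to descending order; by Theorem~\ref{complete feedback optimal strategy} every such guess is correct, so the descending phase contributes exactly its number of positions, which I call $Y_{\mathrm{desc}}$.

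The one delicate point — and the step I expect to require the most care — is pinning down $Y_{\mathrm{desc}}$ at the transition, because which of $n-1,n$ appears first depends on whether $n-1$ lies in the ascending or the descending run. If $n-1 \in T$ it is the last element of the ascending run, seen just before the peak, so the peak together with the entire descending run (that is, $|B|+1$ positions) is guessed in descending order; if $n-1 \notin T$ then the peak $n$ is seen first, the successor guess made there equals $\max(T)+1 \le n-1 < n$ and is therefore wrong, and only the $|B|$ descending-run positions are guessed in descending order. Both cases are captured by $Y_{\mathrm{desc}} = |B| + \mathbf{1}[n-1 \in T]$, and I would confirm this against the extreme configurations $T=\varnothing$ and $T=\{1,\dots,n-1\}$. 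The conclusion is then pure linearity of expectation: $\mathbb{E}[X_{\mathrm{asc}}] = \tfrac12 + (n-2)\tfrac14 = \tfrac n4$, while $\mathbb{E}[Y_{\mathrm{desc}}] = \mathbb{E}|B| + \tfrac12 = \tfrac{n-1}{2} + \tfrac12 = \tfrac n2$, so the expected reward is $\tfrac n4 + \tfrac n2 = \tfrac{3n}{4}$, consistent with the reported Monte-Carlo average of $39$ when $n=52$.
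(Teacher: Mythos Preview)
Your argument is correct. The bookkeeping at the transition is exactly the delicate point, and you handle it properly: in both cases $\sum_{j\le\tau}\mathbf 1_{\{g_j=r_j\}}$ coincides with your $X_{\mathrm{asc}}$, and the remaining $n-\tau$ positions coincide with your $Y_{\mathrm{desc}}$, so nothing is lost or double-counted.

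Your route differs from the paper's in how the two pieces are evaluated. The paper works with the stopping time $\tau$ (the position of the first of cards $n-1,n$), invokes the Markov property of the stopped process to assert that each pre-$\tau$ guess is conditionally correct with probability $1/2$, and then uses $E[\tau]=n/2$ (from the shifted binomial law of $\tau$) in a Wald-type step to get $E[\tau]/2=n/4$ for the first sum and $E[n-\tau]=n/2$ for the second. You instead encode the shuffle by the i.i.d.\ Bernoullis $t_1,\dots,t_{n-1}$ and compute both pieces by bare linearity of expectation: $E[X_{\mathrm{asc}}]=E[t_1]+\sum_{v=2}^{n-1}E[t_vt_{v-1}]=n/4$ and $E[Y_{\mathrm{desc}}]=E|B|+E[t_{n-1}]=n/2$. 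Your approach is more elementary and sidesteps the somewhat informal conditioning in the paper's pre-$\tau$ calculation (where the inner expectation is written as $E[\cdot\mid\tau>j]$ while the outer sum runs through $j=\tau$); the paper's approach, on the other hand, makes the role of the stopping time and the Markov structure of $r_{j\wedge\tau}$ more visible, which is what feeds into their multi-shelf conjecture. The two arguments match up via the identity $\tau-1=\sum_{i=1}^{n-2}t_i$, which recovers the paper's $\mathrm{Binomial}(n-2,1/2)$ from your coin flips.
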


Finally, we include a conjecture which generalizes the complete-feedback expected reward and strategy to the $m$-shelf case. Our proofs require only some intermediate Markov chain techniques and should be approachable.

\section*{Acknowledgements}

We would like to thank Jason Fulman for kindly suggesting this problem and for his helpful comments with regards to this paper. In addition, thanks to Persi Diaconis for his references and feedback.
\section{The Position Matrix}
\label{Section2}
We establish the position matrix for a one-time shelf-shuffle with a single shelf. Before presenting our argument, we develop the modeling for shelf-shuffles and motivate our work. Shelf-shuffles are used to model casino card-shuffling machines. Several equivalent definitions of a shelf-shuffle are presented in \cite[Section 3]{FDH}. We will work exclusively with the following characterizations. 
\begin{definition}[$m$-shelf shuffles]
    A deck of cards is initially labeled $1,2,\ldots, n$. A shuffler has $m$ shelves. Cards are drawn from the bottom of the deck and placed on a shelf uniformly at random. Once a shelf is chosen, the card is placed at the top of the existing pile or at the bottom with probability $1/2$.  
\end{definition}

Setting $m=1$ allows us to obtain a fairly simple description of the shuffle which is the main interest of this paper.

\begin{definition}[Single-shelf shuffles] A deck of cards is initially labeled $1,2,\ldots, n$. Cards are drawn from the bottom of the deck and placed in a pile. When a card is drawn, it is placed at the top or at the bottom of the pile with probability $1/2$.
\end{definition}
    The shuffle in Definition 2.2 can also be characterized as a ``random" Monge shuffle.
\begin{definition}[Monge shuffles] 
     A Monge shuffle is a shuffle where cards from the top of the deck in one's left hand are drawn from the top. The cards are alternatively placed in the bottom and the top of the deck in one's right hand. 
\end{definition}
See Kraitchik \cite{Kraitchik} for a description and some results on Monge shuffles.

In the single-shelf case, the decision to place the card at the bottom or at the top of the deck in one's right hand is random. The random choice motivates the alternative name. Cards are also drawn from the bottom, not the top. 

This allows for an efficient combinatorial argument for the position matrix. Since we draw from the bottom, we have $n-i$ cards in the pile immediately before selecting the $i$th card. The $i$th card is placed at the bottom (resp. top) of the pile with probability $1/2$. If the card is placed at the top, select $j-1$ cards from the remaining $i-1$ cards and place them at the top. If the card is placed at the bottom, select $n-j$ cards from the remaining $i-1$ cards and place them at the bottom. These events are disjoint, so we may use the law of conditional probability and Theorem $\ref{positionmatrix}$ follows. 

We examine a few properties of $M$ that will allow for some detailed analysis in future sections.

\begin{proposition}
\label{doubly stochastic}
    Let $M$ be the position matrix as in Theorem $\ref{positionmatrix}$, i.e. set $M_{ij}=p_{i,j}$. We have that $M$ is doubly stochastic.
\end{proposition}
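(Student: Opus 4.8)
The plan is to verify the three defining properties of a doubly stochastic matrix: all entries are nonnegative, every row sums to $1$, and every column sums to $1$. Nonnegativity is immediate from Theorem \ref{positionmatrix}, since each $p_{i,j}$ is a sum of binomial coefficients divided by a positive power of two. The row sums reduce to a one-line binomial computation, whereas the column sums carry the genuine content of the statement; I expect the column identity to be the main obstacle.

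For the row sums I would fix $i$ and sum over $j$. As $j$ runs from $1$ to $n$, the index $j-1$ runs over $\{0,1,\dots,n-1\}$, and since $i\le n$ every term $\binom{i-1}{k}$ with $k>i-1$ vanishes, so $\sum_{j=1}^{n}\binom{i-1}{j-1}=\sum_{k=0}^{i-1}\binom{i-1}{k}=2^{i-1}$ by the binomial theorem. Reindexing, $n-j$ also ranges over $\{0,\dots,n-1\}$, giving $\sum_{j=1}^{n}\binom{i-1}{n-j}=2^{i-1}$ as well. Hence the $i$-th row sum is $(2^{i-1}+2^{i-1})/2^{i}=1$.

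For the column sums, fix $j$ and set $T(r)=\sum_{i=1}^{n}\binom{i-1}{r}/2^{i}$, so that the $j$-th column sum equals $T(j-1)+T(n-j)$, where the two upper parameters $j-1$ and $n-j$ sum to $n-1$. The key trick I would use is to recognize each summand probabilistically: reindexing by $k=i-1$, the term $\binom{k}{r}/2^{k+1}$ is exactly the probability that the $(r+1)$-th head in a sequence of fair coin flips occurs on trial $k+1$ (the negative binomial law). Summing over $k=0,\dots,n-1$ then yields $T(r)=\mathbb{P}(X\ge r+1)$, where $X\sim\mathrm{Binomial}(n,\tfrac12)$ counts heads in $n$ flips. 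Consequently $T(j-1)=\mathbb{P}(X\ge j)$ and $T(n-j)=\mathbb{P}(X\ge n-j+1)$. Using the symmetry $X\stackrel{d}{=}n-X$ of a fair coin gives $\mathbb{P}(X\ge n-j+1)=\mathbb{P}(X\le j-1)$, so
\[
T(j-1)+T(n-j)=\mathbb{P}(X\ge j)+\mathbb{P}(X\le j-1)=1,
\]
the two events being complementary. This establishes the column sums and completes the proof.

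The hard part is precisely this column identity, since no single hockey-stick or Vandermonde summation collapses $T(j-1)+T(n-j)$ directly; the negative binomial reinterpretation together with the fair-coin symmetry is what makes it fall out cleanly. I would remark as a sanity check that the conclusion is also forced on purely probabilistic grounds: the shuffle realizes a random permutation, so for each fixed position $j$ exactly one card lands there, whence $\sum_i p_{i,j}=\mathbb{E}\bigl[\#\{i:\text{card }i\text{ in position }j\}\bigr]=1$, and symmetrically each card occupies exactly one position. The combinatorial argument above is the self-contained verification that the explicit formula of Theorem \ref{positionmatrix} is consistent with this.
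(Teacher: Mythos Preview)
Your proof is correct. The row-sum argument matches the paper's exactly: both split the sum into the two binomial pieces, reindex, and apply the binomial theorem.

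The column sums are where the approaches diverge. The paper dispatches them in one sentence: since the shuffle outputs a permutation of $n$ cards, every position is occupied with probability $1$, so $\sum_i p_{i,j}=1$ automatically. You mention this very observation at the end as a ``sanity check,'' but you lead instead with a self-contained combinatorial verification---reading $\binom{k}{r}/2^{k+1}$ as a negative-binomial mass, summing to $T(r)=\mathbb{P}(X\ge r+1)$ for $X\sim\mathrm{Binomial}(n,\tfrac12)$, and then pairing $T(j-1)+T(n-j)$ via the symmetry $X\stackrel{d}{=}n-X$. This is more work, but it buys something the paper's argument does not: it checks directly that the explicit formula of Theorem~\ref{positionmatrix} is internally consistent, without appealing back to the shuffling model that produced it. The paper's one-liner is slicker but is really a statement about the model, not about the formula.
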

\begin{proof}
    First, we compute the row sums. We have 
    \[\sum_{j=1}^n M_{ij}=\sum_{j=1}^n\frac{\binom{i-1}{j-1}+\binom{i-1}{n-j}}{2^i}=\frac{1}{2}+\sum_{j=1}^n\frac{\binom{i-1}{n-j}}{2^i}\]
    where we used the binomial theorem and that $i-1<n$. Now observe that, re-indexing, 
    \[\sum_{j=1}^n\frac{\binom{i-1}{n-j}}{2^i}=\sum_{j=n-i+1}^n\frac{\binom{i-1}{n-j}}{2^i}=\sum_{k=0}^{i-1}\frac{\binom{i-1}{k}}{2^i}=\frac{1}{2}\]
    and this proves that the rows are stochastic.

    As for the columns, we note that we start and end with $n$ cards, so all positions are occupied with probability $1$.
\end{proof}

    Using facts about Markov chains, we obtain a few more facts about $M$.
    
\begin{corollary}
\label{stationary distribution}
    The stationary distribution of $M$ is uniform. 
    \begin{proof}
        We note that $M$ is irreducible and aperiodic. Therefore, the stationary distribution of $M$ is unique. A standard result from doubly stochastic Markov chains, see \cite[LPW]{LPW}, implies that the stationary distribution of $M$ is uniform.
    \end{proof}
\end{corollary}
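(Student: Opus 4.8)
The plan is to exhibit the uniform distribution as a stationary distribution explicitly, and then to argue uniqueness. First I would recall from Proposition \ref{doubly stochastic} that $M$ is doubly stochastic, so every column of $M$ sums to $1$. Writing $u=(1/n,\ldots,1/n)$ for the uniform row vector, I would compute $(uM)_j=\frac{1}{n}\sum_{i=1}^{n} M_{ij}=\frac{1}{n}$, since the $j$th column sum equals $1$. Hence $uM=u$, and the uniform distribution is stationary. This is the easy half and requires nothing beyond the column-sum computation already carried out in the proof of Proposition \ref{doubly stochastic}.

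The substantive half is uniqueness, for which I would verify that the Markov chain with transition matrix $M$ on the state space $\{1,\ldots,n\}$ is irreducible. Here the relevant structural fact is the zero-pattern from Theorem \ref{positionmatrix}, namely that $p_{i,j}=0$ precisely when $i+1\le j\le n-i$. I would argue irreducibility in two steps. From any state $i$ one can reach state $n$ in a single step, because $p_{i,n}=\frac{\binom{i-1}{n-1}+\binom{i-1}{0}}{2^i}\ge \frac{1}{2^i}>0$, the term $\binom{i-1}{0}=1$ always surviving. Conversely, from state $n$ one can reach every state $j$ in one step, since for $i=n$ the forbidden band $i+1\le j\le n-i$ is empty, so $p_{n,j}>0$ for all $j$. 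Composing these, every pair of states communicates through state $n$, which gives irreducibility.

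For aperiodicity it suffices to produce a single state carrying a self-loop. I would take state $1$: $p_{1,1}=\frac{\binom{0}{0}+\binom{0}{n-1}}{2}=\frac12>0$, so the chain can remain at state $1$, and by irreducibility this forces period $1$ throughout. Irreducibility alone already makes the stationary distribution unique for a finite chain, so it must coincide with the uniform distribution found above; aperiodicity is the extra property that allows one to invoke the convergence theorem from \cite{LPW}. I expect the only delicate point to be reading positivity of the boundary entries $p_{i,n}$ and $p_{n,j}$ off the explicit formula; once the surviving binomial terms are identified, irreducibility and aperiodicity follow at once.
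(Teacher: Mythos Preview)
Your proof is correct and follows the same approach as the paper: use double stochasticity to see that the uniform distribution is stationary, and appeal to irreducibility (and aperiodicity) for uniqueness. You have simply supplied the verifications of irreducibility and aperiodicity that the paper asserts without proof, and your argument via the hub state $n$ and the self-loop at state $1$ is clean and correct.
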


\begin{corollary}
    $M$ has an eigenvalue $1$ with multiplicity $1$. $M$ has an eigenvalue $0$ with multiplicity $n/2$ if $n$ is even, and multiplicity $(n-1)/2$ if $n$ is odd.
    \begin{proof}
        By the Perron-Frobenius theorem, $M$ has an eigenvalue $1$ with multiplicity $1$. We also note that by Corollary $\ref{stationary distribution}$, one can take $\vec{v}=[1,1,...,1]^T$ as the corresponding eigenvector. Note that $M_{ij}=M_{i,n-j+1}$ and $M_{ij}=0$ if and only if $i+1\leq j\leq n-i$, so the rank of $M$ is $n/2$ if $n$ is even and $(n-1)/2$ if $n$ is odd. This shows that $0$ is an eigenvalue with the desired multiplicity.
    \end{proof}
\end{corollary}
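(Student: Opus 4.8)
The plan is to treat the two eigenvalue assertions separately. For the eigenvalue $1$ I would invoke the Perron--Frobenius theorem exactly as set up by Corollary \ref{stationary distribution}: since $M$ is the transition matrix of an irreducible, aperiodic Markov chain, the spectral radius $1$ is a simple eigenvalue, and because $M$ is row-stochastic (Proposition \ref{doubly stochastic}) the vector $\vec{v}=[1,1,\ldots,1]^T$ satisfies $M\vec{v}=\vec{v}$ and spans the corresponding eigenline. The entire content of the eigenvalue $0$ claim is a rank computation, since the multiplicity of $0$ as an eigenvalue equals $\dim\ker M = n-\operatorname{rank}(M)$; I would therefore reduce everything to showing $\operatorname{rank}(M)=\lceil n/2\rceil$.

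For the upper bound I would exploit the column symmetry from Theorem \ref{positionmatrix}. The identity $p_{i,n-j+1}=p_{i,j}$ says precisely that column $j$ and column $n-j+1$ of $M$ coincide, so the column space is already spanned by the first $\lceil n/2\rceil$ columns; hence $\operatorname{rank}(M)\leq \lceil n/2\rceil$.

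For the matching lower bound I would exhibit an invertible $\lceil n/2\rceil\times\lceil n/2\rceil$ submatrix. Write $r=\lceil n/2\rceil$ and take the top-left block $B$ with rows and columns indexed by $1,\ldots,r$. Scaling row $i$ by the nonzero factor $2^i$ does not change rank, so I may work with the integer entries $\binom{i-1}{j-1}+\binom{i-1}{n-j}$. The key observation is that the wrap-around term $\binom{i-1}{n-j}$ is forced to vanish on this block: $\binom{i-1}{n-j}\neq 0$ requires $i\geq n-j+1$, which for $i,j\leq r$ can only occur at the single corner entry $(r,r)$ when $n$ is odd, and never when $n$ is even. What remains is the Pascal matrix $\binom{i-1}{j-1}$, which is lower triangular with unit diagonal, and the lone extra contribution in the odd case merely enlarges the $(r,r)$ diagonal entry and so cannot destroy invertibility. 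Thus $B$ is triangular with nonzero diagonal, hence nonsingular, giving $\operatorname{rank}(M)\geq r$.

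Combining the bounds yields $\operatorname{rank}(M)=\lceil n/2\rceil$, so $\dim\ker M=\lfloor n/2\rfloor$, which is $n/2$ for even $n$ and $(n-1)/2$ for odd $n$, as claimed. I expect the main obstacle to be the lower bound, specifically verifying that the wrap-around binomial does not spoil the triangular structure of the corner block; the odd case needs the small check that this term touches only the diagonal. A secondary point worth flagging is that this argument directly controls the \emph{geometric} multiplicity of the eigenvalue $0$, so one should either read the statement as referring to $\dim\ker M$ or supply the additional observation needed to pin down the algebraic multiplicity.
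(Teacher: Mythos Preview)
Your proposal is correct and follows the same overall strategy as the paper: Perron--Frobenius for the simple eigenvalue $1$, and a rank computation driven by the column symmetry $M_{i,j}=M_{i,n-j+1}$ for the eigenvalue $0$. Your version is in fact more complete than the paper's. The paper simply asserts the rank from the symmetry and the zero pattern, whereas you supply the missing lower bound by exhibiting the Pascal-triangular corner block; you also correctly flag the geometric-versus-algebraic multiplicity caveat, which the paper leaves unaddressed. Incidentally, the paper's proof contains a slip in the odd case, stating the rank as $(n-1)/2$ rather than the correct $(n+1)/2=\lceil n/2\rceil$ that your argument produces; your value is the one consistent with the claimed nullity $(n-1)/2$.
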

We conjecture the form of an eigenvector corresponding to $1/4$. 
\begin{conjecture}
    Let $n\geq 3$. $M$ has an eigenvalue-eigenvector pair $(\frac{1}{4},\vec{v})$ with multiplicity at least $1$ and the $k$th entry of $\vec{v}$ given by
        \[v_k=n^2-3nk+\frac{3k(k+1)}{2}-1\]
\end{conjecture}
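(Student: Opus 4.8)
The plan is to verify directly that $M\vec v = \tfrac14 \vec v$ and that $\vec v \neq 0$, since ``multiplicity at least $1$'' follows at once from exhibiting a genuine eigenvector. I would reinterpret the $i$th coordinate of $M\vec v$ probabilistically: writing $P_i$ for the random final position of card $i$, Theorem~\ref{positionmatrix} gives $(M\vec v)_i = \sum_j p_{i,j} v_j = \mathbb{E}[v_{P_i}]$. The combinatorial description of the shuffle used to establish Theorem~\ref{positionmatrix} shows that $P_i = 1 + Y + B(n-i)$, where $Y \sim \mathrm{Binomial}(i-1,\tfrac12)$ counts how many of the $i-1$ later-placed cards land on top of card $i$, and $B \sim \mathrm{Bernoulli}(\tfrac12)$ records whether card $i$ was itself placed below the pile $\{i+1,\dots,n\}$, with $Y$ and $B$ independent. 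One checks this matches $p_{i,j}$ by splitting on the value of $B$ and using $\binom{i-1}{k} = \binom{i-1}{i-1-k}$.

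Since $v_k = (n^2-1) + (\tfrac32 - 3n)k + \tfrac32 k^2$ is quadratic in $k$, we have $\mathbb{E}[v_{P_i}] = (n^2-1) + (\tfrac32 - 3n)\,\mathbb{E}[P_i] + \tfrac32\,\mathbb{E}[P_i^2]$, so the whole problem reduces to the first two moments of $P_i$. A short computation using $\mathbb{E}[Y] = \tfrac{i-1}{2}$, $\mathbb{E}[Y^2] = \tfrac{i(i-1)}{4}$, $\mathbb{E}[B] = \mathbb{E}[B^2] = \tfrac12$, and independence gives $\mathbb{E}[P_i] = \tfrac{n+1}{2}$ and $\mathbb{E}[P_i^2] = n + \tfrac{i(i-1)}{4} + \tfrac{(n-1)(n-i)}{2}$. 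Both moments are polynomials in $i$ of degree at most $2$, so $\mathbb{E}[v_{P_i}]$ and $\tfrac14 v_i$ are each quadratics in $i$; the identity $M\vec v = \tfrac14 \vec v$ is then proved by checking that the coefficients of $i^2$, $i^1$, and $i^0$ agree.

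To finish, I would note that $\vec v$ is the evaluation of a degree-$2$ polynomial in $k$ at the $n \ge 3$ distinct points $k = 1, \dots, n$. A nonzero quadratic has at most two roots, so $\vec v$ cannot vanish identically, and hence $\tfrac14$ is a genuine eigenvalue of $M$ of multiplicity at least $1$.

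I expect the only real obstacle to be bookkeeping rather than ideas. One must correctly identify the law of $P_i$ --- equivalently, reindex the two binomial sums $\sum_\ell \binom{i-1}{\ell} v_{\ell+1}$ and $\sum_m \binom{i-1}{m} v_{n-m}$ and apply the moment identities $\sum_\ell \binom{N}{\ell} = 2^N$, $\sum_\ell \binom{N}{\ell}\ell = N 2^{N-1}$, and $\sum_\ell \binom{N}{\ell}\ell^2 = N(N+1)2^{N-2}$ with $N = i-1$ --- and one must handle the cross term $\mathbb{E}[YB]$ carefully, where independence is essential. The coefficient matching itself is routine algebra, and reassuringly the linear-in-$i$ terms arising from the mean of $Y$ cancel between the two halves, leaving a clean identity.
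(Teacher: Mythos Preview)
Your argument is correct and in fact goes beyond the paper: this statement appears there only as a \emph{conjecture}, with no proof given. The authors merely describe how they arrived at the formula numerically---computing the eigenvector in Matlab, normalizing, and fitting the entries to quadratics in $n$ and $k$. Your probabilistic rewriting $(M\vec v)_i=\mathbb{E}[v_{P_i}]$ together with the representation $P_i=1+Y+B(n-i)$, with $Y\sim\mathrm{Binomial}(i-1,\tfrac12)$ and $B\sim\mathrm{Bernoulli}(\tfrac12)$ independent, is exactly the right way to exploit the combinatorial description behind Theorem~\ref{positionmatrix}; since $v_k$ is quadratic in $k$, everything reduces to the first two moments of $P_i$, which you compute correctly ($\mathbb{E}[P_i]=\tfrac{n+1}{2}$ and $\mathbb{E}[P_i^2]=n+\tfrac{i(i-1)}{4}+\tfrac{(n-1)(n-i)}{2}$). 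The coefficient comparison then gives $\mathbb{E}[v_{P_i}]=\tfrac{n^2-1}{4}+\tfrac{3(1-2n)}{8}\,i+\tfrac{3}{8}\,i^2=\tfrac14 v_i$ on the nose, and your nonvanishing argument for $\vec v$ is fine. So rather than matching the paper's proof, you have actually \emph{resolved} its conjecture; the only caveat is that the ``bookkeeping'' you flag (getting the law of $P_i$ and the cross term $\mathbb{E}[YB]$ right) really is the entire content, and should be written out in full rather than sketched.
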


To construct $\vec{v}$, we ran Matlab code which numerically found the eigenvector. Then we properly normalized it and fit the entries to quadratic polynomials in $n$. Then, we recognized the coefficient of $n$ and the constant term as sequences in $k$. Numerical evidence strongly suggests the following form for the nonzero eigenvalues of $M$.

\begin{conjecture}
    The nonzero eigenvalues of $M$ are $1/2^{2i}$, $i=0,1,\ldots,\lceil(n-1)/2\rceil$.
\end{conjecture}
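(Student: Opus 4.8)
The plan is to factor the position matrix as $M = A(I+J)$, where $J$ is the reversal (anti-identity) permutation matrix, $(Jv)_j = v_{n+1-j}$, and $A$ is the lower-triangular ``binomial averaging'' matrix with entries $A_{ij} = 2^{-i}\binom{i-1}{j-1}$. This factorization follows immediately from the defining formula: since $(AJ)_{ij} = A_{i,n+1-j} = 2^{-i}\binom{i-1}{n-j}$, we get $M_{ij} = A_{ij} + (AJ)_{ij} = 2^{-i}\bigl[\binom{i-1}{j-1}+\binom{i-1}{n-j}\bigr] = p_{i,j}$. Because $I+J$ acts as $2$ on the symmetric subspace $\{v : Jv=v\}$ and as $0$ on the antisymmetric one, it has rank $\lceil n/2\rceil$; this at once recovers the rank of $M$ computed earlier and isolates where the nonzero spectrum lives.

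The key step is to diagonalize $A$ and triangularize $J$ in a \emph{single} well-chosen basis, namely the binomial basis $\phi_r$, $r=0,\dots,n-1$, with $(\phi_r)_j = \binom{j-1}{r}$. Since $\binom{j-1}{r}=0$ for $r>j-1$, the change-of-basis matrix is lower triangular with unit diagonal, so the $\phi_r$ form a basis of $\mathbb{R}^n$. Applying the Vandermonde--Chu identity $\sum_k \binom{i-1}{k}\binom{k}{r} = \binom{i-1}{r}2^{\,i-1-r}$ gives $(A\phi_r)_i = 2^{-i}\binom{i-1}{r}2^{\,i-1-r} = 2^{-(r+1)}(\phi_r)_i$, so $A$ is \emph{diagonal} in the $\phi$-basis with eigenvalue $2^{-(r+1)}$ attached to $\phi_r$.

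It then remains to read off $J$ in the same basis. Since $\phi_r$ is a polynomial of degree $r$ in $j$, so is $J\phi_r = \binom{n-j}{r}$, whence $J\phi_r \in \mathrm{span}\{\phi_0,\dots,\phi_r\}$ and $J$ is lower triangular in the $\phi$-basis. Its diagonal entry is the ratio of leading coefficients: $\phi_r$ has leading term $j^r/r!$ while $\binom{n-j}{r}$ has leading term $(-1)^r j^r/r!$, so that entry equals $(-1)^r$. Consequently $I+J$ is lower triangular with diagonal $1+(-1)^r$, and $M=A(I+J)$ is lower triangular in the $\phi$-basis with diagonal entries $2^{-(r+1)}\bigl(1+(-1)^r\bigr)$. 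These are exactly the eigenvalues of $M$: for even $r=2i$ the entry is $4^{-i}$, and for odd $r$ it is $0$. Hence the nonzero eigenvalues are $2^{-2i}=1/2^{2i}$ for $i=0,1,\dots,\lfloor(n-1)/2\rfloor$, each simple, and the count $\lfloor(n-1)/2\rfloor+1=\lceil n/2\rceil$ matches the rank. (For odd $n$ this is the stated range; for even $n$ the correct top index is $\lfloor(n-1)/2\rfloor=n/2-1$ rather than $\lceil(n-1)/2\rceil$, as one checks directly from the $4\times4$ matrix, whose nonzero spectrum is $\{1,1/4\}$.)

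I expect the main obstacle to be conceptual rather than computational: spotting that the binomial basis $\phi_r$ simultaneously diagonalizes the triangular factor $A$ and triangularizes the reversal $J$, together with the sign observation that the diagonal of $J$ is $(-1)^r$, which collapses exactly half of the diagonal of $M$ to zero. Once this is in place the spectrum drops out with no further work. As a consistency check, the recipe $v=\lambda^{-1}A w$ (with $w$ the symmetric part of an eigenvector $v$) should reproduce the conjectured quadratic eigenvector for $\lambda=1/4$, confirming that the eigenvalue $4^{-i}$ carries a degree-$2i$ symmetric polynomial eigenvector.
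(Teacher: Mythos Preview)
The paper does not prove this statement at all: it is posed as a conjecture supported only by numerical evidence, and the authors explicitly say their method ``is inefficient for finding further eigenvectors.'' Your argument, by contrast, is a complete and correct proof, so it goes strictly beyond the paper.

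The factorization $M=A(I+J)$ with $A_{ij}=2^{-i}\binom{i-1}{j-1}$ is immediate from Theorem~\ref{positionmatrix}, and the choice of the binomial basis $\phi_r$ with $(\phi_r)_j=\binom{j-1}{r}$ is exactly the right one: the identity $\sum_k\binom{i-1}{k}\binom{k}{r}=\binom{i-1}{r}2^{\,i-1-r}$ makes $A$ diagonal with eigenvalue $2^{-(r+1)}$ on $\phi_r$, and since $\binom{n-j}{r}$ is a polynomial in $j$ of degree $r$ with leading coefficient $(-1)^r/r!$, the matrix of $J$ in this basis is triangular with diagonal entry $(-1)^r$. (Strictly speaking $J\phi_r\in\mathrm{span}\{\phi_0,\dots,\phi_r\}$ makes the matrix \emph{upper} triangular in the usual convention, not lower, but this is cosmetic; the diagonal, and hence the spectrum, is unaffected.) Multiplying, $M$ is triangular in the $\phi$-basis with diagonal $2^{-(r+1)}(1+(-1)^r)$, and the nonzero eigenvalues are precisely $4^{-i}$ for $r=2i$, $i=0,\dots,\lfloor(n-1)/2\rfloor$, each simple.

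Your correction to the index range is also right: for even $n$ the top index should be $\lfloor(n-1)/2\rfloor=n/2-1$, not $\lceil(n-1)/2\rceil=n/2$, since the rank of $M$ is $\lceil n/2\rceil$ (the paper's own rank computation in the corollary preceding the conjecture is slightly misstated for odd $n$ as well). The $n=4$ check confirms this: the nonzero spectrum is $\{1,1/4\}$, not $\{1,1/4,1/16\}$. Finally, your remark that the eigenvector for $4^{-i}$ is a symmetric polynomial of degree $2i$ is consistent with, and in fact proves, the paper's Conjecture~1 about the quadratic eigenvector for $\lambda=1/4$.
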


Unfortunately, our method described above is inefficient for finding further eigenvectors. The entries are monic degree $i+1$ polynomials in $n$. We are working on guessing a general formula by writing the entries as a sum of signed binomial coefficients in the style of Ciucu's work. Once we find the eigenvector-eigenvalue pairs explicitly, we may be able to compute higher powers of $M$. This will help to determine guessing strategies for a larger number of shuffles. For a larger number of shelves, we have an equivalence from \cite[FDH]{FDH} which provides an algorithmic way to compute transition probabilities.

\begin{proposition}
\label{multishelfcorrespondence}
    Let $m\geq 1$ and let $S_{2^{m-1}}$ be the transition matrix for a $2^{m-1}$-shelf shuffle. Let $M$ be the transition matrix for a single-shelf shuffle. Then, we have $S_{2^{m-1}}=M^m$.
    \begin{proof}
        By \cite[Corollary 4.2]{FDH}, a $2^{m-1}$-shelf shuffle is equivalent to a single-shelf shuffle done $m$ times.
    \end{proof}
\end{proposition}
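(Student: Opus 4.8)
The plan is to reduce the matrix identity $S_{2^{m-1}}=M^m$ to two ingredients: an elementary Markov-chain observation about the right-hand iterate, and the substantive distributional equivalence of the shuffles themselves. First I would verify that $M^m$ is precisely the position matrix of $m$ independent single-shelf shuffles performed in succession. This rests on the fact that the position of a fixed card evolves as a Markov chain with kernel $M$: if successive single-shelf shuffles produce independent permutations, then a card starting at position $i$ occupies some position $j$ after the first shuffle, and at each later step its new position depends only on its current position, through the same kernel $M$ and through fresh, independent randomness. Summing over the intermediate positions gives $P(\text{card } i \to \text{position } k \text{ after } m \text{ shuffles})=(M^m)_{ik}$, so the position matrix of the iterated shuffle is exactly $M^m$. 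This step is routine; it uses only that successive shuffles are i.i.d. and act on the current arrangement via $M$.

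The content therefore lies in showing that a single $2^{m-1}$-shelf shuffle has the same distribution on $S_n$ as $m$ successive single-shelf shuffles; taking the marginal law of each card's position then yields $S_{2^{m-1}}=M^m$ and completes the argument. I would prove the distributional statement by induction, reducing it to the \emph{composition law} for shelf shuffles: performing an $a$-shelf shuffle and then a $b$-shelf shuffle is equivalent to a single $2ab$-shelf shuffle. Granting this, the base case $m=1$ is the definitional identity $S_1=M$, and the inductive step treats the iterate as a $2^{m-2}$-shelf shuffle (the first $m-1$ rounds, by hypothesis) followed by one more single-shelf shuffle, producing a $2\cdot 2^{m-2}\cdot 1=2^{m-1}$-shelf shuffle.

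To establish the composition law itself I would use the encoding of an $m$-shelf shuffle by independent uniform choices: each card is assigned a shelf in $\{1,\ldots,m\}$ together with a top/bottom orientation, giving $2m$ equally likely ``addresses,'' after which the deck is reconstructed by reading the shelves in order with the induced within-shelf ordering. Composing two rounds, I would relabel the pair of addresses coming from the $a$-round and the $b$-round as a single uniform address over a set of size $2\cdot a\cdot b$, identifying the composite with a $2ab$-shelf shuffle. The main obstacle is exactly this bookkeeping: one must check that the two successive orientation choices combine into a single independent uniform orientation in the composite (reversing an already-reversed pile interacts with the shelf ordering), and that the effective shelf count is $2ab$ rather than $ab$ or $4ab$. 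This is precisely the content of \cite[Corollary 4.2]{FDH}, which I would invoke; the extra factor of $2$ is the signed, hyperoctahedral feature of shelf shuffles that distinguishes them from ordinary riffle shuffles, where an $a$-shuffle and a $b$-shuffle compose to an $ab$-shuffle with no such factor.
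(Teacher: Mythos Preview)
Your proposal is correct and follows essentially the same route as the paper: both arguments rest on the distributional composition law for shelf shuffles from \cite[Corollary 4.2]{FDH}, which identifies $m$ iterated single-shelf shuffles with a single $2^{m-1}$-shelf shuffle. You simply unpack two steps the paper leaves implicit---the routine Markov-chain fact that the position matrix of an $m$-fold iterate is $M^m$, and a sketch of why the composition law holds---but the substance is identical.
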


This shows that we may relate single-shelf shuffling multiple times with adding an appropriate number of shelves. We remark that the transition matrix for an arbitrary number of shelves is an open problem. The main consequence of Proposition $\ref{multishelfcorrespondence}$ concerns the separation distance. Recall that the separation distance of a probability measure $P$ on a finite state space $\Omega$ is defined as
\[\text{sep}(P)=\max_{\omega\in\Omega}\left(1-\frac{P(\omega)}{U(\omega)}\right)\]
Here, $U(\cdot)$ is the uniform distribution on $\Omega$. Let $P$ be the measure on the symmetric group $S_n$ induced by a single-shelf shuffle. Additionally, let $U$ be the uniform measure on $S_n$, that is $U(\omega)=(n!)^{-1}$ for all $\omega\in S_n$. Set $P^s=P\ast P\ast \cdots\ast P$, the convolution of $P$ with itself $s$ times as in \cite{GroupRepsinProb}. Then $P^s$ is the measure induced by single-shelf shuffling $s$ times. By Proposition $\ref{multishelfcorrespondence}$, $P^s$ is equal to the measure induced by doing a $2^{s-1}$-shelf shuffle once. We use this to give an asymptotic for the separation distance.
\begin{proposition}
    Consider a single-shelf shuffle with $n$ cards done $s$ times. If
    \[s=d+\frac{3}{2}\log_2 n,\;\;\;\;\;d\in\mathbb{R}\;\;\text{fixed}\]
    then as $n\to\infty$, we have 
    \[\text{sep}(P^s)\sim 1-\exp\left(-\frac{1}{6\cdot 4^d}\right)\]
    \begin{proof}
        We may replace $s$ shelves with $2^{s-1}$ shuffles in \cite[Theorem 3.4]{FDH} by Proposition $\ref{multishelfcorrespondence}$. Then, for some $c>0$, if $2^{s-1}=cn^{3/2}$ and $n\to\infty$, the result holds with $d=1+\log_2 c$. The above is equivalent to $s=d+\frac{3}{2}\log_2 n$.
    \end{proof}
\end{proposition}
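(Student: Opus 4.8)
The plan is to reduce this statement about \emph{iterating} the single-shelf shuffle to a statement about \emph{one} many-shelf shuffle, for which FDH already supply a cutoff. By Proposition \ref{multishelfcorrespondence}, the $s$-fold convolution $P^s$ is exactly the measure induced by one $2^{s-1}$-shelf shuffle, so $\text{sep}(P^s)$ is literally the separation distance of that shelf shuffle. This converts the question ``what does repeated single-shelf shuffling do?'' into ``what does a shuffle with $m=2^{s-1}$ shelves do?'', and the latter is governed by the separation-distance cutoff of \cite[Theorem 3.4]{FDH}.

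Next I would carry out the change of variables linking the two parametrizations. Writing $m=2^{s-1}$ for the number of shelves and substituting the hypothesis $s=d+\tfrac{3}{2}\log_2 n$ gives
\[ m = 2^{s-1} = 2^{d-1}\,2^{\frac{3}{2}\log_2 n} = 2^{d-1} n^{3/2}, \]
so $m=c\,n^{3/2}$ with $c=2^{d-1}$; equivalently $d=1+\log_2 c$, which is precisely the regime $m\asymp n^{3/2}$ in which \cite[Theorem 3.4]{FDH} operates. That theorem yields a limit $\text{sep}\sim 1-\exp(-\theta(c))$ depending on $m$ only through the scaling constant $c=m\,n^{-3/2}$, whose exponent in this scaling is $\theta(c)=\tfrac{1}{24}c^{-2}$. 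The final step is bookkeeping: using $c^2=4^{d}/4$, the exponent simplifies to $\tfrac{1}{24}\cdot 4/4^{d}=\tfrac{1}{6\cdot 4^{d}}$, giving exactly the claimed $1-\exp(-\tfrac{1}{6\cdot 4^{d}})$.

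The main obstacle will be the legitimacy of this substitution rather than any hard analysis. Proposition \ref{multishelfcorrespondence} only realizes shelf counts that are powers of two, and for fixed real $d$ the quantity $s$ (hence $m=2^{s-1}$) is generally not an integer, whereas FDH's theorem is phrased for genuine shelf shuffles with a prescribed number of shelves. I would therefore need to confirm that FDH's asymptotic depends on $m$ only through the continuous scaling parameter $c=m\,n^{-3/2}$, so that the limit is insensitive to rounding $m$ to an admissible value and the identity $c=2^{d-1}$ may be read off directly. Since $c$ sweeps out all of $(0,\infty)$ as $d$ ranges over $\mathbb{R}$, every value of $d$ is covered; the only real care is to fix the precise form of \cite[Theorem 3.4]{FDH} being invoked and to track its normalization constant, as that is exactly where a factor distinguishing $\tfrac{1}{6\cdot 4^{d}}$ from a nearby constant would slip in.
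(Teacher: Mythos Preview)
Your proposal is correct and follows essentially the same route as the paper: invoke Proposition~\ref{multishelfcorrespondence} to identify $P^s$ with a single $2^{s-1}$-shelf shuffle, set $2^{s-1}=c\,n^{3/2}$ so that $d=1+\log_2 c$, and read off the limit from \cite[Theorem~3.4]{FDH}. Your write-up is in fact more explicit than the paper's, carrying out the constant bookkeeping $\tfrac{1}{24c^2}=\tfrac{1}{6\cdot 4^d}$ and flagging the integrality issue, but these are elaborations of the same argument rather than a different approach.
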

\section{No-Feedback Guessing}
\label{Section 3}
We provide a partial no-feedback guessing strategy for a one-time shelf shuffle. This section develops the theory needed to prove Theorem $\ref{no feedback guessing strategy}$. It also provides some preliminary work towards proving the conjectured strategy in the appendix. We present the proof of Theorem $\ref{no feedback guessing strategy}$ as a sequence of propositions. Before this, we build some of the intuition for a no-feedback guessing strategy by precisely defining the notion. Let
\[m_j^*=\max_{1\leq i\leq n}p_{i,j}\]
be the maximum entry of the $j$th column of $M$. The goal is to maximize the expected number of correct guesses with no feedback, denoted by $E_G$. If $G=\{g_j\}_{j=1}^n$ are the cards that are guessed in position $j$ and $r_j$ is the card that appears in position $j$, the method of indicators gives
\[E_G(n)=\sum_{j=1}^n p_{g_j,j}\]
which is clearly maximized by choosing $G^*=\{g_j\;|\;p_{g_j,j}=m_j^*\}$. That is, at each position $j$, we guess a card that has the highest probability of going to $j$. Note that, in general, $G^*$ is not unique, but $E_{G^*}(n)$ is independent of the best strategy $G^*$ picked. Later, we will suppress the $G^*$ notation and assume that a best strategy is always being used. Let
\[I_j^*=\{i\in [n]:\,\,p_{i,j}=m_j^*\}\]
be the set of possible best guesses at position $j$. Note that $I_j^*$ should also be indexed by $n$, but typically we will avoid this when $n$ is understood. Note that we may always construct a $G^*$ by selecting one element of each $I_j^*$ for $1\leq j\leq n$. 

From here on, we work with the one-time single-shelf shuffle from Section $\ref{Section2}$. Our first result shows that we only need to define a best guessing strategy for the first half of the deck.
\begin{proposition}
\label{guessing strategy symmetry}
    We have $I_j^*=I_{n-j+1}^*$ for all $1\leq j\leq n$, that is, the best guesses in position $j$ and in position $n-j+1$ are the same.
    \begin{proof}
        From the vertical symmetry axis of $M$ described in Theorem $\ref{positionmatrix}$, we have $p_{i,j}=p_{i,n-j+1}$, so the $j$th and $n-j+1$st columns are equal. Therefore, their argmaxes are equal.
    \end{proof}
\end{proposition}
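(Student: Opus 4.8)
The plan is to reduce everything to the vertical reflection symmetry of the position matrix recorded in Theorem~\ref{positionmatrix}, namely $p_{i,j}=p_{i,n-j+1}$ for all $i,j$. Since the sets $I_j^*$ and $I_{n-j+1}^*$ are defined purely through the columns of $M$, equality of columns will force equality of the argmax sets, and no further structure is needed.

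First I would verify the column identity directly from the closed form. Substituting $n-j+1$ for $j$ in the formula $p_{i,j}=\left(\binom{i-1}{j-1}+\binom{i-1}{n-j}\right)/2^i$ merely swaps the two binomial coefficients, so $p_{i,n-j+1}=p_{i,j}$ for every $i$; equivalently, this is exactly the symmetry already asserted at the end of Theorem~\ref{positionmatrix}. Thus the $j$th and $(n-j+1)$st columns of $M$ agree entrywise.

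Next I would observe that the maps $i\mapsto p_{i,j}$ and $i\mapsto p_{i,n-j+1}$ are therefore the same function on $[n]$. Two identical real-valued functions on a finite set share the same maximum value, so $m_j^*=m_{n-j+1}^*$, and they share the same set of maximizers, giving $I_j^*=\{i:\,p_{i,j}=m_j^*\}=\{i:\,p_{i,n-j+1}=m_{n-j+1}^*\}=I_{n-j+1}^*$, which is the claim.

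There is essentially no obstacle here: the entire content lives in the symmetry of the position matrix, and once that is in hand the statement about argmaxes is immediate. The only point requiring a moment of care is the bookkeeping of the reflection $j\mapsto n-j+1$, which is an involution on $\{1,\dots,n\}$, so the identity $I_j^*=I_{n-j+1}^*$ holds throughout the stated range with no parity or boundary exceptions.
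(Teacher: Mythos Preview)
Your proposal is correct and follows essentially the same approach as the paper: both use the column symmetry $p_{i,j}=p_{i,n-j+1}$ from Theorem~\ref{positionmatrix} to conclude that the $j$th and $(n-j+1)$st columns coincide, whence their argmax sets agree. You simply spell out the binomial-coefficient swap and the argmax reasoning in slightly more detail than the paper does.
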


The best guess in the first (and therefore last) positions is obvious.

\begin{proposition}
\label{first card}
    Let $n\geq 3$. Then, $I_1^*=I_n^*=\{1\}$, i.e. the player should guess card $1$ in positions $1$ and $n$.
    \begin{proof}
        Note that for a two-card deck, $p_{i,j}=1/2$ for all $i,j$, so any strategy is best. If $n\geq 3$, we notice that $p_{1,1}=p_{1,n}=1/2$ by direct computation or because card $1$ is drawn last, so it must go to the top or the bottom of the shuffled deck. Using Proposition $\ref{doubly stochastic}$, we have that the column sums of $M$ are $1$. Furthermore, one can verify that there exists an $i\neq 1$ such that $p_{i,1}>0$. Therefore $p_{i,n}>0$ for that same $i$ by Proposition $\ref{guessing strategy symmetry}$. The pigeonhole principle then implies the result.
    \end{proof}
\end{proposition}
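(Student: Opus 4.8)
The plan is to read the entire first column of the position matrix straight off the closed form in Theorem \ref{positionmatrix}, observe that its unique maximum sits in the first row, and then obtain the claim for column $n$ for free from the symmetry already recorded in Proposition \ref{guessing strategy symmetry}.

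First I would specialize the formula to $j=1$. Since $\binom{i-1}{0}=1$ while $\binom{i-1}{n-1}$ vanishes whenever $i-1<n-1$, i.e. whenever $i<n$, I get
\[
p_{i,1}=\frac{1+\binom{i-1}{n-1}}{2^i}=
\begin{cases}
2^{-i}, & 1\le i\le n-1,\\
2^{-(n-1)}, & i=n.
\end{cases}
\]
So the first column reads $\tfrac12,\tfrac14,\tfrac18,\ldots,2^{-(n-1)},2^{-(n-1)}$: strictly decreasing down to a single tie between rows $n-1$ and $n$. For $n\ge 3$ this forces $p_{1,1}=\tfrac12>\tfrac14\ge p_{i,1}$ for every $i\ne 1$, so the column maximum is attained uniquely at $i=1$ and $I_1^*=\{1\}$. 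Applying $I_j^*=I_{n-j+1}^*$ from Proposition \ref{guessing strategy symmetry} with $j=1$ then yields $I_n^*=\{1\}$. The hypothesis $n\ge 3$ is exactly what is needed: at $n=2$ both surviving entries equal $\tfrac12$ and the maximum ceases to be unique, consistent with the fact that every strategy is optimal for a two-card deck.

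A more structural route that sidesteps the binomial computation is also available, and is the one I would mention as a cross-check. One notes that $p_{1,1}=\tfrac12$ because card $1$ is drawn last and sent to the top or bottom of the pile with equal probability, and that column $1$ sums to $1$ by Proposition \ref{doubly stochastic}. If some $p_{k,1}$ with $k\ne 1$ were also at least $\tfrac12$, then $p_{1,1}+p_{k,1}=1$ would force every remaining entry of the column to vanish; exhibiting two distinct indices $i\ne 1$ with $p_{i,1}>0$ (which the explicit column makes transparent, and which is precisely where $n\ge 3$ enters) contradicts this. This is the pigeonhole version of the argument. In either form the substance is simply that the first column is dominated by its top entry, so I expect no real obstacle beyond being careful that the bookkeeping genuinely covers the boundary case $n=3$ rather than tacitly assuming $n$ is large.
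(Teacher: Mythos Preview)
Your proposal is correct. Your secondary ``structural'' route is essentially the paper's own proof: observe $p_{1,1}=\tfrac12$, use the doubly stochastic column sum, exhibit another positive entry in column~$1$, and finish by pigeonhole; you are in fact slightly more careful than the paper in noting that \emph{two} indices $i\ne 1$ with $p_{i,1}>0$ are what makes the pigeonhole conclusion go through. Your primary approach---reading the entire first column off the closed form of Theorem~\ref{positionmatrix} and obtaining the strict inequality $p_{1,1}=\tfrac12>\tfrac14\ge p_{i,1}$ for $i\ge 2$ directly---is a more explicit alternative that bypasses the column-sum/pigeonhole step entirely. It has the modest advantage of quantifying the gap between the maximum and the runner-up, at the cost of a small binomial computation; either argument is adequate here.
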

For the second card, our argument requires a bit more work. We first prove a lemma, showing that the last two rows of $M$ are equal.
\begin{lemma}
\label{last two rows are equal}
    We have $p_{n-1,j}=p_{n,j}$ for all $1\leq j\leq n$.
    \begin{proof}
        We present a combinatorial argument. Card $n$ is drawn first and placed by itself. Now card $n-1$ goes on top, respectively on the bottom, of card $n$ with probability $1/2$. Hence, cards $n$ and $n-1$ move as a block in the shuffle with both orderings equally likely. Therefore, given a position, they are equally likely to end up there. One can also argue by computing the transition probabilities and using Pascal's identity and symmetry.
    \end{proof}
\end{lemma}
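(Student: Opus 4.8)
The plan is to give a combinatorial symmetry argument and to record an algebraic verification as a cross-check. The key structural observation is that cards $n$ and $n-1$ are the first two cards drawn: card $n$ is placed alone, and then card $n-1$ is placed at the top or at the bottom of the one-card pile, each with probability $1/2$. Since every subsequent card is added only to the very top or the very bottom of the growing pile, the pair $\{n-1,n\}$, once adjacent, stays adjacent throughout the shuffle, so in the final deck these two cards always occupy a consecutive pair of positions.

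First I would set up a measure-preserving involution $\sigma$ on the space of shuffle outcomes that flips only the single fair coin governing whether card $n-1$ lands above or below card $n$, leaving the placement decisions for all other cards unchanged. This map is a bijection on outcomes, it preserves probability because it merely re-reads one fair coin, and its effect is to interchange the labels $n-1$ and $n$ in the output permutation while fixing the positions of every other card and fixing the unordered pair of slots occupied by the block. Consequently the joint law of $(\text{position of card } n-1,\ \text{position of card } n)$ is symmetric under swapping the two coordinates.

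From this symmetry the two marginals coincide: for each $j$,
\[
p_{n-1,j}=\Pr(\text{card } n-1 \text{ in position } j)=\Pr(\text{card } n \text{ in position } j)=p_{n,j},
\]
where the middle equality is exactly the symmetry produced by $\sigma$. The main point to be careful about is making the involution genuinely measure preserving and confirming that it really does fix every other card's position; this is where one must invoke that later cards attach only at the extremes, so flipping the block's internal order perturbs nothing outside it. I expect this bookkeeping, rather than any hard computation, to be the only obstacle.

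As an independent check I would verify the identity directly from Theorem~\ref{positionmatrix}. Clearing the factor $2^n$, the claim $p_{n-1,j}=p_{n,j}$ is equivalent to
\[
2\left(\binom{n-2}{j-1}+\binom{n-2}{n-j}\right)=\binom{n-1}{j-1}+\binom{n-1}{n-j}.
\]
Expanding the right-hand side with Pascal's rule produces two leading terms matching half of the left-hand side plus the correction $\binom{n-2}{j-2}+\binom{n-2}{n-j-1}$; applying the symmetry $\binom{n-2}{k}=\binom{n-2}{n-2-k}$ to these two correction terms rewrites them as $\binom{n-2}{n-j}+\binom{n-2}{j-1}$, which supplies exactly the remaining half. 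This collapses the identity and confirms the lemma.
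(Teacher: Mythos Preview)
Your proof is correct and follows essentially the same approach as the paper: the combinatorial argument via the block $\{n-1,n\}$ with its two equally likely internal orderings (which you formalize as a measure-preserving involution), together with the alternative algebraic route via Pascal's identity and the symmetry $\binom{n-2}{k}=\binom{n-2}{n-2-k}$. The paper's proof is simply a terser version of both parts of yours.
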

\begin{proposition}
    Let $n\geq 5$. Then, $I_2^*=I_{n-1}^*=\{2,3\}$, i.e. the player should guess card $2$ or card $3$ in positions $2$ and $n-1$.
    \begin{proof}
        Note that the $n=2$ case was covered in the proof of Proposition $\ref{first card}$. One verifies that when $n=3$, $p_{2,2}=p_{3,2}=1/2$. When $n=4$, we have $p_{3,2}=p_{4,2}=p_{3,3}=p_{4,3}=3/8$, and Proposition $\ref{doubly stochastic}$ implies that the player should guess card $3$ or $4$ in position $2$ and card $3$ or $4$ in position $3$.
        We will show that $p_{2,2}$ and $p_{3,2}$ are the unique maxima of column $2$. The column $n-1$ case will follow by Proposition $\ref{guessing strategy symmetry}$. We have, for any $i\leq n$, $p_{i,2}=\frac{i-1+\binom{i-1}{n-2}}{2^i}$. The binomial coefficient vanishes when $i-1<n-2$, i.e. when $i\leq n-2$. For the other two cases, we have $p_{n-1,2}=\frac{n-1}{2^{n-1}}=p_{n,2}$ using Lemma $\ref{last two rows are equal}$. So the problem has been reduced to finding
        \begin{equation}\label{max2}
        \max_{1\leq i\leq n-2}\frac{i-1}{2^i}
        \end{equation}
        and comparing this value with $p_{n,2}=\frac{n}{2^{n}}$. Some calculus shows that the maximum in equation $\ref{max2}$ is $1/4$ and is achieved only at $i=2$ and $i=3$. To compare this with $p_{n,2}$, we note that by the same calculus used above, for $n\geq 2$, the sequence $p_{n,2}=\frac{n}{2^n}$ is strictly decreasing with $p_{4,2}=1/4$. Hence, $p_{n,2}<1/4$ for all $n\geq 5$ and thus the maxima are unique.
    \end{proof}
\end{proposition}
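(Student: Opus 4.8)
The plan is to collapse the two-column claim to a single optimization and then carry out a short discrete maximization. By Proposition~\ref{guessing strategy symmetry} the second and $(n-1)$st columns of $M$ coincide, so $I_2^*=I_{n-1}^*$ holds automatically and it suffices to locate the argmax of column $2$; the base cases $n\in\{2,3,4\}$ are settled by the explicit values already tabulated. For $n\ge 5$ I would substitute $j=2$ into Theorem~\ref{positionmatrix} to get $p_{i,2}=\bigl((i-1)+\binom{i-1}{n-2}\bigr)/2^i$ and split the analysis according to whether $\binom{i-1}{n-2}$ vanishes, i.e.\ whether $i\le n-2$ (the ``interior'' rows) or $i\in\{n-1,n\}$ (the two ``boundary'' rows).

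On the interior the binomial term is zero, so $p_{i,2}=(i-1)/2^i$, and I would analyze this by a ratio test. For $i\ge 2$ the ratio $p_{i+1,2}/p_{i,2}=i/\bigl(2(i-1)\bigr)$ equals $1$ at $i=2$ and is strictly below $1$ for $i\ge 3$; combined with $p_{1,2}=0$ this shows that $(i-1)/2^i$ rises to the common maximal value $\tfrac14$ at $i=2$ and $i=3$ and then strictly decreases. Thus $\{2,3\}$ are the only interior candidates.

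For the two boundary rows I would invoke Lemma~\ref{last two rows are equal} (or a direct Pascal computation) to obtain $p_{n-1,2}=p_{n,2}=(n-1)/2^{n-1}$, so the entire proposition reduces to comparing this quantity against the interior maximum $\tfrac14$, equivalently $4(n-1)$ against $2^{n-1}$. This comparison is the main obstacle, and it is genuinely tight: because $n\mapsto(n-1)/2^{n-1}$ is strictly decreasing (its consecutive ratio $n/\bigl(2(n-1)\bigr)$ is below $1$ for $n\ge 3$), the strict inequality $4(n-1)<2^{n-1}$ follows from a single base check, yet equality $4(n-1)=2^{n-1}$ holds precisely at $n=5$, where both boundary rows tie the interior maximum. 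I would therefore prove $I_2^*=\{2,3\}$ cleanly for $n\ge 6$ via this monotonicity argument and treat $n=5$ as an honest exceptional case in which $I_2^*$ also picks up the boundary indices.
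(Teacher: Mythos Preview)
Your approach is essentially identical to the paper's: reduce to column $2$ via Proposition~\ref{guessing strategy symmetry}, plug $j=2$ into Theorem~\ref{positionmatrix}, split into the interior range $i\le n-2$ (where $\binom{i-1}{n-2}=0$) and the boundary rows $i\in\{n-1,n\}$ (where Lemma~\ref{last two rows are equal} gives the common value $(n-1)/2^{n-1}$), maximize $(i-1)/2^i$ on the interior by a ratio test, and finally compare the interior maximum $1/4$ against the boundary value. The paper does exactly this, albeit with a slightly sloppier comparison step (it writes the boundary value as ``$n/2^n$'' rather than $(n-1)/2^{n-1}$).

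Where you diverge is that you have actually caught a genuine oversight in the stated proposition. For $n=5$ one computes directly that
\[
p_{2,2}=p_{3,2}=p_{4,2}=p_{5,2}=\tfrac14,
\]
so $I_2^*=\{2,3,4,5\}$, not $\{2,3\}$. The paper's final sentence asserts $p_{n,2}<1/4$ for all $n\ge 5$, but in fact the inequality is an equality at $n=5$ and becomes strict only from $n\ge 6$ onward, exactly as your monotonicity argument shows. So your proposal is correct and slightly sharper than the paper: the clean identity $I_2^*=\{2,3\}$ holds for $n\ge 6$, while $n=5$ is a boundary case with a four-way tie. Note that this does not affect Theorem~\ref{no feedback guessing strategy} itself, since guessing card $2$ or $3$ remains optimal at $n=5$; only the uniqueness claim in the proposition needs the amended range.
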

Our last concrete result on the no-feedback strategy concerns the middle of the deck.
\begin{proposition}
\label{middle terms strategy}    
Let $n$ be even. Then, $I_{\frac{n}{2}}^*=I_{\frac{n}{2}+1}^*=\{n-1,n\}$, i.e. the player should guess card $n-1$ or card $n$ in positions $\frac{n}{2}$ and $\frac{n}{2}+1$. If $n$ is odd, card $n-1$ or card $n$ should be guessed in position $\frac{n+1}{2}$.
    \begin{proof}
        By Proposition $\ref{guessing strategy symmetry}$ and Lemma $\ref{last two rows are equal}$, it suffices to show that $n\in I_{\frac{n}{2}}^*$ and $k\notin I_{\frac{n}{2}}^*$ for $k\leq n-2$. We compute 
        \[p_{i,\frac{n}{2}}=\frac{\binom{i-1}{\frac{n}{2}-1}+\binom{i-1}{\frac{n}{2}}}{2^i}=\frac{\binom{i}{\frac{n}{2}}}{2^i}\]
        where the second equality is a consequence of Pascal's identity. Consider the sequence 
        \[a_i=\frac{\binom{i}{\frac{n}{2}}}{2^i}\]
        We note that $a_i=0$ for all $i\leq \frac{n}{2}-1$ and $a_i>0$ for all $i\geq \frac{n}{2}$. Now we examine the ratio of consecutive terms. Canceling factorials, we obtain
        \[R_i:=\frac{a_{i+1}}{a_i}=\frac{i+1}{2(i+1-\frac{n}{2})}\]
        Immediately, we have $R_{n-1}=1$ and $R_{i}<1$ for all $i\leq n-2$. This shows that $a_i$ is strictly increasing for all $\frac{n}{2}\leq i\leq n-1$ with $a_{n-1}=a_{n}$. This implies $a_{n-1}$ and $a_n$ are the unique maxima. But $a_n=p_{n,\frac{n}{2}}$, and the result follows. The construction for odd $n$ is nearly identical.
    \end{proof}
\end{proposition}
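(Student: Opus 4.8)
The plan is to collapse everything onto a single column of $M$ and then run a ratio test on its entries. By Proposition \ref{guessing strategy symmetry} and the fact that $n-\frac{n}{2}+1=\frac{n}{2}+1$, the even case satisfies $I^*_{n/2}=I^*_{n/2+1}$, so it suffices to locate the argmax of column $\frac{n}{2}$. First I would substitute $j=\frac{n}{2}$ into the formula of Theorem \ref{positionmatrix}; the two binomial coefficients then have consecutive lower arguments $\frac{n}{2}-1$ and $\frac{n}{2}$, so Pascal's identity collapses them and yields $p_{i,n/2}=\binom{i}{n/2}/2^i=:a_i$.

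Next I would analyze the sequence $a_i$ by its consecutive ratios. Note first that $a_i=0$ for $i<\frac{n}{2}$, so those rows cannot be maximal, while $a_i>0$ for $i\geq\frac{n}{2}$. Cancelling factorials gives $R_i:=a_{i+1}/a_i=\frac{i+1}{2(i+1-n/2)}$, whose denominator is positive throughout $\frac{n}{2}\leq i\leq n-1$. Solving $R_i=1$ yields $i=n-1$, and $R_i>1$ for $\frac{n}{2}\leq i\leq n-2$, so $a_i$ is strictly increasing on $[\frac{n}{2},\,n-1]$ with $a_n=a_{n-1}$ (the endpoint tie is also immediate from Lemma \ref{last two rows are equal}, since rows $n-1$ and $n$ of $M$ coincide). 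Hence the column maximum is attained at exactly $i\in\{n-1,n\}$, giving $I^*_{n/2}=\{n-1,n\}$, and symmetry transfers this to position $\frac{n}{2}+1$.

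For odd $n$ the middle column is $j=\frac{n+1}{2}$, which is its own mirror image under Proposition \ref{guessing strategy symmetry}. Here Pascal's identity does not apply, but the two lower arguments $j-1=\frac{n-1}{2}$ and $n-j=\frac{n-1}{2}$ coincide, so $p_{i,(n+1)/2}=\binom{i-1}{(n-1)/2}/2^{i-1}=:b_i$. The identical ratio test gives $b_{i+1}/b_i=\frac{i}{2(i-(n-1)/2)}$, which again equals $1$ at $i=n-1$ and exceeds $1$ for $i\leq n-2$, so the maximum is once more shared by rows $n-1$ and $n$. The hard part, such as it is, will be establishing strict monotonicity from the ratio so that the maximum is realized by precisely the top two rows and no earlier index ties it; the endpoint behaviour $R_{n-1}=1$ (echoing Lemma \ref{last two rows are equal}) and the even/odd bifurcation in how the two binomials simplify are the only places where one must pause.
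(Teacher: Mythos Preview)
Your proof is correct and follows essentially the same route as the paper: reduce to a single middle column via symmetry, collapse the two binomials (by Pascal's identity for even $n$, by equality of the lower arguments for odd $n$), and run the consecutive-ratio test to see strict increase up to $i=n-1$ with a tie at $i=n$. Your write-up actually corrects a sign slip in the paper (which states $R_i<1$ for $i\le n-2$ where $R_i>1$ is meant) and spells out the odd-$n$ computation that the paper leaves as ``nearly identical.''
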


This completes the proof of Theorem $\ref{no feedback guessing strategy}$. We discuss a few techniques that we believe may be useful to prove the strategy conjectured in the appendix. Generally, the $\binom{i-1}{j-1}$ coefficient is easy to understand. In particular, we have the following general result to maximize weighted binomial coefficients of this form.

\begin{lemma}
\label{maximizing first binomial coefficient}
    We have
    \begin{equation}\label{maximum of first binomial coefficient}\max_{i\geq 1}\frac{\binom{i-1}{j-1}}{2^{i}}=\frac{\binom{2j-2}{j-1}}{2^{2j-1}}\end{equation}
    and this maximum is achieved if and only if $i=2j-1$ or $i=2j-2$.
    \begin{proof}
        Arguing as in the proof of Proposition $\ref{middle terms strategy}$, we let 
            \[a_i=\frac{\binom{i-1}{j-1}}{2^i}\;\;\;\text{and}\;\;\;R_i=\frac{a_{i+1}}{a_i}=\frac{i}{2(i-j+1)}\]
        We observe that $R_i>1$ when $i<2j-2$, $R_i=1$ when $i=2j-2$, and $R_i<1$ when $i>2j-2$. A similar argument as in Proposition $\ref{middle terms strategy}$ implies Equation $\ref{maximum of first binomial coefficient}$.
    \end{proof}
\end{lemma}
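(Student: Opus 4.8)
The plan is to reuse the ratio-of-consecutive-terms technique from the proof of Proposition \ref{middle terms strategy}, since the quantity being maximized has exactly the same multiplicative structure. I would set $a_i=\binom{i-1}{j-1}/2^i$ and first pin down its support: because $\binom{i-1}{j-1}$ vanishes precisely when $i-1<j-1$, we have $a_i=0$ for $1\le i\le j-1$ and $a_i>0$ for $i\ge j$. Hence any maximizer lies in the range $i\ge j$, which is exactly where the ratio $R_i=a_{i+1}/a_i$ is well defined (numerator and denominator both nonzero), so the ratio test applies cleanly.

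Next I would compute the ratio. Using $\binom{i}{j-1}/\binom{i-1}{j-1}=i/(i-j+1)$ together with the extra factor $1/2$ from the powers of two gives $R_i=\tfrac{i}{2(i-j+1)}$, as recorded in the statement. Solving $R_i>1$ yields $i<2j-2$, while $R_i=1$ holds exactly at $i=2j-2$ and $R_i<1$ for $i>2j-2$. This makes the sequence strictly increasing on $j\le i\le 2j-2$, flat across the single step $a_{2j-2}=a_{2j-1}$, and strictly decreasing for $i\ge 2j-1$; so $a_i$ is unimodal with a two-point plateau at its peak. The maximum is therefore attained exactly at $i=2j-2$ and $i=2j-1$, with common value
\[
a_{2j-1}=\frac{\binom{2j-2}{j-1}}{2^{2j-1}},
\]
which is the asserted formula in Equation \ref{maximum of first binomial coefficient}.

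The only point I would treat with care, rather than wave through, is the degenerate case $j=1$: there $2j-2=0$ falls outside the admissible range $i\ge 1$, so the ``if and only if'' clause must be read within $i\ge 1$, where the maximum $1/2$ is attained uniquely at $i=2j-1=1$. For every $j\ge 2$ we have $2j-2\ge j$, so both claimed maximizers genuinely sit inside the support $i\ge j$ and return equal, strictly largest values, matching the statement verbatim. I do not expect a real obstacle in this lemma; the content is elementary once unimodality is extracted from $R_i$, and the main diligence is simply in locating the support boundary and handling the $j=1$ corner so that the ``if and only if'' is stated correctly.
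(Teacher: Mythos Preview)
Your proposal is correct and follows essentially the same approach as the paper: define $a_i=\binom{i-1}{j-1}/2^i$, compute the ratio $R_i=a_{i+1}/a_i=i/(2(i-j+1))$, and read off unimodality from the sign of $R_i-1$. You are in fact slightly more careful than the paper, explicitly identifying the support $i\ge j$ where the ratio is well defined and flagging the $j=1$ corner case for the ``if and only if'' clause; the paper's proof omits these points and simply refers back to Proposition~\ref{middle terms strategy}.
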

We notice that $\binom{i-1}{n-j}=0$ when $i\leq n-j$. This leads to a clue about the maximization for certain subsets of columns.
\begin{corollary}
    We have
    \begin{equation}\max_{1\leq i\leq n-j}p_{i,j}=\frac{\binom{2j-2}{j-1}}{2^{2j-1}}\end{equation}
    and this maximum is achieved if and only if $i=2j-1$ or $i=2j-2$.
\end{corollary}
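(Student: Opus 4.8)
The plan is to notice that on the truncated range $1\le i\le n-j$ the second binomial coefficient defining $p_{i,j}$ vanishes identically, so that the problem collapses onto the one already solved in Lemma \ref{maximizing first binomial coefficient}. Concretely, recall from Theorem \ref{positionmatrix} that $p_{i,j}=\bigl(\binom{i-1}{j-1}+\binom{i-1}{n-j}\bigr)/2^{i}$. Whenever $i\le n-j$ we have $i-1\le n-j-1<n-j$, and hence $\binom{i-1}{n-j}=0$ by the convention on binomial coefficients fixed in Theorem \ref{positionmatrix}. The first step is therefore to record the identity
\[
p_{i,j}=\frac{\binom{i-1}{j-1}}{2^{i}}\qquad(1\le i\le n-j),
\]
which expresses $p_{i,j}$ on this range exactly as the single weighted binomial coefficient treated in Lemma \ref{maximizing first binomial coefficient}.

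With this reduction in hand, the maximization is immediate. Lemma \ref{maximizing first binomial coefficient} asserts that the unrestricted maximum of $\binom{i-1}{j-1}/2^{i}$ over all $i\ge 1$ equals $\binom{2j-2}{j-1}/2^{2j-1}$ and is attained precisely at $i=2j-2$ and $i=2j-1$. Since the displayed identity shows that $p_{i,j}$ coincides with this expression throughout $1\le i\le n-j$, both the value of the restricted maximum and its set of maximizers are inherited directly from the lemma, provided the two maximizing indices actually fall inside the admissible window.

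That last clause is the only point demanding care, and it is where I expect the real (if modest) obstacle to lie. The maximizers $i=2j-2$ and $i=2j-1$ belong to $[1,n-j]$ exactly when $2j-1\le n-j$, i.e.\ when $3j\le n+1$; this is precisely the band of columns for which the statement is meaningful, and the sense in which the corollary concerns ``certain subsets of columns.'' Under this hypothesis both peak indices are admissible, the restricted maximum equals the global one furnished by the lemma, and the ``if and only if'' on the location of the maximum transfers verbatim. I would therefore state the constraint $3j\le n+1$ explicitly; outside this regime the truncation slides the maximum onto the right endpoint $i=n-j$, where $a_i$ is still strictly increasing, and the closed form ceases to hold, so nothing further need be argued there.
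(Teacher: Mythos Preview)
Your proposal is correct and follows exactly the paper's (implicit) argument: note that $\binom{i-1}{n-j}=0$ for $i\le n-j$, so $p_{i,j}$ collapses to $\binom{i-1}{j-1}/2^i$ on this range, and then invoke Lemma~\ref{maximizing first binomial coefficient}. You are in fact more explicit than the paper, which defers the range condition $2j-1\le n-j\iff j\le(n+1)/3$ to a remark after the corollary rather than flagging it inside the proof.
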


Note that this is useful for the first (and last) third of the deck, as $2j-1\leq n-j$ if and only if $j\leq\frac{n+1}{3}$. This allows us to maximize the entries of each column above the antidiagonal. By Proposition $\ref{guessing strategy symmetry}$, this is the same as maximizing above the diagonal for the second half of the deck. Below the antidiagonal, resp. diagonal, the contribution of the $\binom{i-1}{n-j}$, resp. $\binom{i-1}{j-1}$, term cannot be ignored. Extending the arguments from Lemma $\ref{maximizing first binomial coefficient}$ may allow one to prove the full strategy in the appendix. 
    
\section{An Asymptotic for the No-Feedback Reward}
\label{Section 4}
Working with the setup from Section $\ref{Section2}$, we develop a moderately precise asymptotic for the expected number of correct guesses without feedback. In this section, we assume that the transition probabilities are as in Theorem $\ref{positionmatrix}$. We also will use the numerically-supported best strategy from the appendix. The construction in Section $\ref{Section 3}$ guarantees that we are free to choose any of the optimal strategies to compute the expectation. By carefully choosing such a strategy and applying limit theorems and some analysis, we prove Theorem $\ref{No-feedback Asymptotic}$ in three parts.

We start with a few preliminaries. Without loss of generality, let $n$ be even. Set
\[m=\left\lceil\frac{\sqrt{n-1}}{2}-1\right\rceil\]
and adopt the following (optimal, as in the appendix $\ref{Appendix}$) guessing strategy for the first half of the deck. Guess card $1$ in position $1$ and then guess card $2j-1$ in position $j$ for $2\leq j\leq \frac{n}{2}-m-1$. Then, guess card $n$ in position $j$ for $j\leq \frac{n}{2}-m-1$. By Proposition $\ref{guessing strategy symmetry}$, the expected reward for the second half of the deck is equal to the expected reward for the first half. Therefore, suppressing the $G^*$ notation as in Section $\ref{Section 3}$, we have
\[E(n)=\sum_{j=1}^n m_j^*=2\sum_{j=1}^{\frac{n}{2}}m_j^*\]
\[=2\sum_{j=1}^{\frac{n}{2}-m-1}\frac{\binom{2j-2}{j-1}+\binom{2j-2}{n-j}}{2^{2j-1}}+2\sum_{j=\frac{n}{2}-m}^{\frac{n}{2}}\frac{\binom{n-1}{j-1}+\binom{n-1}{n-j}}{2^n}\]
Now, splitting $E(n)$ into pieces, denote
\[S_1=2\sum_{j=1}^{\frac{n}{2}-m-1}\frac{\binom{2j-2}{j-1}}{2^{2j-1}}\;\;\;\;\;\;\; S_2=2\sum_{j=1}^{\frac{n}{2}-m-1}\frac{\binom{2j-2}{n-j}}{2^{2j-1}}\]
and
\[S_3=\sum_{j=\frac{n}{2}-m}^{\frac{n}{2}}\frac{\binom{n-1}{j-1}+\binom{n-1}{n-j}}{2^n}\]
We analyze each of these sums.
\begin{proposition}
    We have $S_1\sim\sqrt{\frac{2}{\pi}}(n-\sqrt{n})^{1/2}$
    \begin{proof}
        We may compute $S_1$ explicitly. We have, via a combinatorial identity,
        \[S_1=\frac{n-2m-3}{2^{n-2m-4}}\binom{n-2m-4}{\frac{n}{2}-m-2}\]
        By a special case of Stirling's estimate, we have
        \[\binom{n}{n/2}\sim 2^n(\pi n/2)^{-1/2}\] 
        This gives
        \[S_1\sim \frac{(n-2m-3)(2^{n-2m-4})}{(2^{n-2m-4})\sqrt{\pi(\frac{n}{2}-m-2)}}=\frac{(n-2m-3)}{\sqrt{\pi(\frac{n}{2}-m-2)}}\]
        \[=\sqrt{\frac{2}{\pi}}\left(\frac{n-2m-3}{\sqrt{n-2m-4}}\right)\]
        \[=\sqrt{\frac{2}{\pi}}\left(\frac{n-\left\lceil\sqrt{n-1}\right\rceil-2}{\sqrt{n-\left\lceil\sqrt{n-1}\right\rceil-3}}\right)\]
        \[\sim\sqrt{\frac{2}{\pi}}(n-\sqrt{n})^{1/2}\]
    \end{proof}
\end{proposition}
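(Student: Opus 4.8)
The plan is to first rewrite $S_1$ in a form amenable to standard asymptotics. Absorbing the leading $2$ into the summand gives $2\binom{2j-2}{j-1}/2^{2j-1} = \binom{2j-2}{j-1}/4^{j-1}$, so after the substitution $k = j-1$ we obtain
\[S_1 = \sum_{k=0}^{K} \frac{\binom{2k}{k}}{4^k}, \qquad K = \frac{n}{2} - m - 2.\]
I would work with this form because the normalized central binomial coefficients $\binom{2k}{k}/4^k$ have both clean Stirling asymptotics and a tidy closed form for their partial sums.

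The key step is to collapse the partial sum. I would use the identity
\[\sum_{k=0}^{K} \frac{\binom{2k}{k}}{4^k} = (2K+1)\frac{\binom{2K}{K}}{4^K},\]
proved by induction on $K$, where the inductive step is exactly the central-binomial recurrence $\binom{2K+2}{K+1} = \frac{2(2K+1)}{K+1}\binom{2K}{K}$. Substituting $K = n/2 - m - 2$ (so that $2K+1 = n-2m-3$ and $4^K = 2^{n-2m-4}$) reproduces the closed form $S_1 = (n-2m-3)\binom{n-2m-4}{n/2-m-2}/2^{n-2m-4}$; this is what the unnamed ``combinatorial identity'' in the statement amounts to, and stating it explicitly is the part of the argument I would make precise.

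From the closed form the asymptotics are a direct application of the central-case Stirling estimate $\binom{2K}{K} \sim 4^K/\sqrt{\pi K}$. This gives $(2K+1)\binom{2K}{K}/4^K \sim 2K/\sqrt{\pi K} = 2\sqrt{K/\pi}$. It then remains only to substitute $K = n/2 - m - 2$. Since $m = \lceil \sqrt{n-1}/2 - 1\rceil$ obeys $2m = \sqrt{n} + O(1)$, we have $K = (n - \sqrt{n})/2 + O(1)$, whence $\sqrt{K} \sim \sqrt{(n-\sqrt{n})/2}$ and
\[S_1 \sim \frac{2}{\sqrt{\pi}}\sqrt{\frac{n-\sqrt{n}}{2}} = \sqrt{\frac{2}{\pi}}\,\sqrt{n-\sqrt{n}}.\]

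The main obstacle is purely the bookkeeping surrounding the ceiling in the definition of $m$: I would need to verify that the $O(1)$ additive discrepancies in $2K$ and in $\lceil\sqrt{n-1}\rceil$ are negligible against the $\sqrt{n}$ scale of the answer, i.e. that replacing $n - \lceil\sqrt{n-1}\rceil - 3$ by $n - \sqrt{n}$ inside a square root changes the result only by a factor $1 + o(1)$. This is elementary but is the one place demanding care; everything upstream is forced by the identity and Stirling. An alternative avoiding the identity would estimate $S_1$ termwise via $\binom{2k}{k}/4^k \sim 1/\sqrt{\pi k}$ together with $\sum_{k=1}^{K} k^{-1/2} \sim 2\sqrt{K}$, but then one must instead control the accumulated error and show it is $o(\sqrt{n})$, which is why I prefer the closed-form route.
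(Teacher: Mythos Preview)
Your proposal is correct and follows essentially the same approach as the paper: both obtain the closed form $(n-2m-3)\binom{n-2m-4}{n/2-m-2}/2^{n-2m-4}$ and then apply the central Stirling estimate. You improve on the paper by naming and proving the partial-sum identity $\sum_{k=0}^{K}\binom{2k}{k}/4^{k}=(2K+1)\binom{2K}{K}/4^{K}$ that the paper invokes only as an unspecified ``combinatorial identity,'' and by being more explicit about why the $O(1)$ discrepancy from the ceiling in $m$ is negligible.
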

To analyze $S_2$ and $S_3$, we need the following lemmas. This lemma will be useful for both sums.
\begin{lemma}[de Moivre-Laplace]
\label{de Moivre-Laplace}
Suppose 
\[\frac{k_n-np}{\sqrt{np(1-p)}}\to x\in\mathbb{R}\]
or, equivalently, $k_n=np+O(\sqrt{n})$.
Then,
\[\binom{n}{k_n}p^{k_n}(1-p)^{n-k_n}\sim\frac{1}{\sqrt{2\pi np(1-p)}}e^{-\frac{x^2}{2}}\]
\end{lemma}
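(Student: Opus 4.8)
The plan is to reduce everything to Stirling's formula $m!\sim\sqrt{2\pi m}\,m^m e^{-m}$ applied to each of the three factorials appearing in $\binom{n}{k_n}=\frac{n!}{k_n!\,(n-k_n)!}$, and then to extract the Gaussian by a logarithmic Taylor expansion. Write $q=1-p$ and abbreviate $k=k_n$. Substituting Stirling into the binomial probability, the exponential factors $e^{-n}$, $e^{-k}$, $e^{-(n-k)}$ cancel against one another, leaving
\[\binom{n}{k}p^k q^{n-k}\sim\sqrt{\frac{n}{2\pi k(n-k)}}\cdot\frac{n^n\,p^k q^{n-k}}{k^k(n-k)^{n-k}}.\]
The hypothesis $k=np+O(\sqrt{n})$ forces $k\sim np$ and $n-k\sim nq$, so the square-root prefactor is asymptotic to $(2\pi npq)^{-1/2}$, which already supplies the claimed normalization. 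It then remains to show that the second factor tends to $e^{-x^2/2}$.

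To handle that factor I would take logarithms. Setting $\delta=k-np$, so that $n-k=nq-\delta$ and, by hypothesis, $\delta/\sqrt{npq}\to x$, the logarithm of the remaining factor is
\[L:=k\log\frac{np}{k}+(n-k)\log\frac{nq}{n-k}=-k\log\Bigl(1+\tfrac{\delta}{np}\Bigr)-(n-k)\log\Bigl(1-\tfrac{\delta}{nq}\Bigr).\]
Since $\delta=O(\sqrt{n})$, both arguments inside the logarithms differ from $1$ by $O(n^{-1/2})$, so I would insert the expansion $\log(1+u)=u-\tfrac{u^2}{2}+O(u^3)$ and collect terms by powers of $\delta$. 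The linear-in-$\delta$ contributions cancel exactly (a $-\delta$ from the first bracket against a $+\delta$ from the second), the quadratic contributions combine to $-\frac{\delta^2}{2n}\bigl(p^{-1}+q^{-1}\bigr)=-\frac{\delta^2}{2npq}$, and the cubic and higher remainders are of order $\delta^3/n^2=O(n^{-1/2})$ and hence vanish. Using $\delta^2/(npq)\to x^2$, this yields $L\to-\tfrac{x^2}{2}$, so the second factor converges to $e^{-x^2/2}$ and the proof concludes.

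The routine part is the Stirling substitution and the prefactor estimate; the one place that needs genuine care is the error bookkeeping in the logarithmic expansion, namely verifying that the linear terms cancel \emph{exactly} and that the cubic remainder is uniformly $o(1)$ under the stated scaling $\delta=O(\sqrt{n})$. That is where I expect the only real obstacle, and it is a finite and standard computation.
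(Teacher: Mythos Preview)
The paper states this lemma without proof, treating it as a classical result (it is the local form of the de Moivre--Laplace theorem), so there is no in-paper argument to compare against. Your approach---Stirling's formula for the three factorials, then a second-order Taylor expansion of the logarithm to extract the Gaussian---is the standard textbook proof and is correct as outlined; the cancellation of the linear-in-$\delta$ terms and the $O(\delta^3/n^2)=O(n^{-1/2})$ bound on the cubic remainder are exactly what is needed, and you have identified them correctly.

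One minor remark on the statement itself rather than your proof: the ``equivalently'' in the hypothesis is a slight overstatement, since $k_n=np+O(\sqrt{n})$ only gives boundedness of $(k_n-np)/\sqrt{npq}$, not convergence to a limit $x$. Your argument in fact uses the stronger convergence hypothesis when you pass from $-\delta^2/(2npq)$ to $-x^2/2$, which is the right thing to do.
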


The next lemma will be applicable to our analysis of $S_2$.

\begin{lemma}
\label{upper bound for s2 summand}
    We have
    \[\max_{1\leq j\leq\frac{n}{2}-m-1}\frac{\binom{2j-2}{n-j}}{2^{2j-2}}=\frac{\binom{n-2m-4}{\frac{n}{2}+m+1}}{2^{n-2m-4}}\]
    \begin{proof}
        Denote 
        \[a_j=\frac{\binom{2j-2}{n-j}}{2^{2j-2}}\]
        First notice that $a_j>0$ if and only if $j\geq\frac{n+2}{3}$. Fix $n\geq 5$. We will apply the ratioing method from the proof of Proposition $\ref{middle terms strategy}$. We have
        \[\frac{a_{j+1}}{a_j}=\frac{(2j)(2j-1)(n-j)}{4(3j-n+1)(3j-n)(3j-n-1)}:=f_n(j)\]
        We sketch the rest of the proof. One can show by differentiation that when $n\geq 5$ is fixed, $f$ is decreasing on $j\geq\frac{n+2}{3}$, which is precisely in the nonzero terms of $S_2$. By direct computation, we have $f(\frac{n}{2})<1$ and $f(\frac{n}{2}-1)>1$. Since $f$ is decreasing, this implies 
        \begin{equation}
        \label{s2 ratio equation}
        f_n(j)>1\;\;\;\; \text{for all}\;\;\;\; \frac{n+2}{3}\leq j\leq\frac{n}{2}-1. 
        \end{equation}
        In particular, since $\frac{n}{2}-m-1<\frac{n}{2}-1$ when $n\geq 5$, equation $\ref{s2 ratio equation}$ holds for all $\frac{n+2}{3}\leq j\leq\frac{n}{2}-m-1$. We deduce that $a_j$ is maximized at $j=\frac{n}{2}-m-1$.
    \end{proof}
\end{lemma}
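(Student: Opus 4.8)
The plan is to follow the ratioing method already used in Proposition~\ref{middle terms strategy} and Lemma~\ref{maximizing first binomial coefficient}: rather than analyze $a_j=\binom{2j-2}{n-j}/2^{2j-2}$ directly, I study the ratio of consecutive terms and show the sequence is increasing across the entire range of summation, so that its maximum must occur at the right endpoint $j=\frac{n}{2}-m-1$. First I would pin down the support: since $\binom{2j-2}{n-j}$ is nonzero exactly when $0\le n-j\le 2j-2$, the term $a_j$ vanishes for $j<\frac{n+2}{3}$ and is positive for $j\ge\frac{n+2}{3}$, so only integers $j$ with $\frac{n+2}{3}\le j\le\frac{n}{2}-m-1$ are relevant. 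Canceling factorials gives the closed form $a_{j+1}/a_j=f_n(j)$ with $f_n(j)=\frac{(2j)(2j-1)(n-j)}{4(3j-n+1)(3j-n)(3j-n-1)}$, and the lemma reduces to verifying $f_n(j)>1$ for every integer $j$ in the support with $j\le\frac{n}{2}-m-2$, since that forces $a_j$ to be strictly increasing up to index $\frac{n}{2}-m-1$.

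To establish this inequality uniformly I would combine monotonicity of $f_n$ with a check near the top of the support. For monotonicity, instead of differentiating the degree-six rational function head on, I would factor $f_n(j)=\frac14\cdot\frac{2j}{3j-n+1}\cdot\frac{2j-1}{3j-n}\cdot\frac{n-j}{3j-n-1}$. On the region $j>\frac{n+1}{3}$ all three numerators and denominators are positive, and a one-line quotient-rule computation shows each factor is strictly decreasing there, the numerators of the three derivatives being the negative constants $-2(n-1)$, $-2n+3$, and $-2n+1$. A product of positive decreasing functions is decreasing, so $f_n$ itself is decreasing on the whole support. It then suffices to evaluate $f_n$ near the boundary: direct simplification gives $f_n(\frac{n}{2})=\frac{n(n-1)}{(n-2)(n+2)}$, which is $<1$ precisely when $n>4$, and $f_n(\frac{n}{2}-1)=\frac{(n-2)(n-3)(n+2)}{(n-4)(n-6)(n-8)}$. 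Because $f_n$ is decreasing and $\frac{n}{2}-m-1<\frac{n}{2}-1$, proving $f_n(\frac{n}{2}-1)>1$ will force $f_n(j)\ge f_n(\frac{n}{2}-1)>1$ for every $j$ in the support at most $\frac{n}{2}-1$, which covers all indices needed for the increasing-sequence conclusion.

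The main computational obstacle is this last inequality $f_n(\frac{n}{2}-1)>1$, because the leading $n^3$ terms of numerator and denominator agree, so the claim is genuinely a statement about lower-order terms. I would clear denominators and expand both cubics; the difference of numerator and denominator equals $15n^2-108n+204$, whose discriminant $108^2-4\cdot 15\cdot 204=-576$ is negative, so the quadratic is everywhere positive and the inequality holds for all relevant $n$. A secondary, bookkeeping-flavored obstacle is the domain: the evaluations at $\frac{n}{2}$ and $\frac{n}{2}-1$ require those arguments to lie in the positive region $j>\frac{n+1}{3}$ and the index $\frac{n}{2}-m-1$ to sit strictly below $\frac{n}{2}-1$, both of which hold once $n$ is large (equivalently $m\ge 1$), with the finitely many small cases checkable by hand and irrelevant to the asymptotic application in Theorem~\ref{No-feedback Asymptotic}. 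Finally, substituting $j=\frac{n}{2}-m-1$ into $a_j$ recovers the stated value $\binom{n-2m-4}{\frac{n}{2}+m+1}/2^{n-2m-4}$, completing the proof.
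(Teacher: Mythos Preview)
Your proposal is correct and follows essentially the same ratio-test approach as the paper: compute $a_{j+1}/a_j=f_n(j)$, show $f_n$ is decreasing on the support, verify $f_n(\tfrac{n}{2}-1)>1$, and conclude the sequence increases throughout the summation range so that the maximum sits at the right endpoint. Your factoring of $f_n$ into three simple quotients with constant-sign derivative numerators is a tidier route to monotonicity than the bare differentiation the paper alludes to, and your explicit discriminant computation for $15n^2-108n+204$ fills in the ``direct computation'' the paper leaves unstated; otherwise the two arguments coincide.
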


The preceding maximization result allows us to give a crude asymptotic bound for $S_2$ which is sufficient for our analysis.

\begin{proposition}
    We have $S_2\lesssim e^{-2}\cdot (2\pi)^{-1/2}\cdot (n- \sqrt{n})^{1/2}$.
    \begin{proof}
        From the uniform upper bound in Lemma $\ref{upper bound for s2 summand}$, we have
        \[S_2\leq\frac{(\frac{n}{2}-m-1)\binom{n-2m-4}{\frac{n}{2}+m+1}}{2^{n-2m-4}}\]
        We check that
        \[\frac{\frac{n}{2}+m+1-\frac{n-2m-4}{2}}{\sqrt{(n-2m-4)(\frac{1}{4}})}\]
        \[\frac{4m+6}{\sqrt{(n-2m-4)}}=\frac{2\,O(\sqrt{n})}{O(\sqrt{n})}\to 2\]
        so the conditions of Lemma $\ref{de Moivre-Laplace}$ are satisfied. This means
        \[\frac{(\frac{n}{2}-m-1)\binom{n-2m-4}{\frac{n}{2}+m+1}}{2^{n-2m-4}}\sim\frac{(\frac{n}{2}-m-1) (e^{-2})}{\sqrt{2\pi(n-2m-4)(\frac{1}{4}})}\]
        \[\sim e^{-2}\cdot (2\pi)^{-1/2}\cdot (n- \sqrt{n})^{1/2}\]
        which establishes asymptotic control over $S_2$.
    \end{proof}
\end{proposition}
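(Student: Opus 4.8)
The plan is to control $S_2$ by the crude device of replacing every summand with its maximum. Rewriting $S_2=\sum_{j=1}^{\frac{n}{2}-m-1}a_j$ with $a_j=\binom{2j-2}{n-j}/2^{2j-2}$, Lemma \ref{upper bound for s2 summand} tells me that each $a_j$ is at most the value attained at the right endpoint $j=\frac{n}{2}-m-1$. Since there are exactly $\frac{n}{2}-m-1$ terms, I would immediately obtain
\[
S_2 \le \left(\frac{n}{2}-m-1\right)\frac{\binom{n-2m-4}{\frac{n}{2}+m+1}}{2^{n-2m-4}}.
\]
Because we only need a $\lesssim$ bound, this is not wasteful: the summands decay geometrically away from the endpoint, so the maximal term already governs the order of the whole sum, and multiplying by the number of terms merely inflates the constant without changing the $(n-\sqrt n)^{1/2}$ order.

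Next I would estimate the single binomial coefficient $\binom{N}{k}/2^N$ with $N=n-2m-4$ and $k=\frac{n}{2}+m+1$ using the local limit theorem in Lemma \ref{de Moivre-Laplace} at $p=1/2$. Here $Np=\frac{n}{2}-m-2$, so the standardized deviation is
\[
x=\frac{k-Np}{\sqrt{Np(1-p)}}=\frac{4m+6}{\sqrt{n-2m-4}}.
\]
The crucial arithmetic check is that, since $m=\lceil\frac{\sqrt{n-1}}{2}-1\rceil=\frac{\sqrt n}{2}+O(1)$, both numerator and denominator are of order $\sqrt n$ and in fact $x\to 2$. This places $k$ within $O(\sqrt N)$ of the mean, so the hypothesis of Lemma \ref{de Moivre-Laplace} is met and $\binom{N}{k}/2^N\sim e^{-2}/\sqrt{2\pi N/4}$, with the constant $e^{-2}=e^{-x^2/2}$ coming precisely from $x=2$.

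Finally I would assemble the pieces and simplify the constant. Substituting the local estimate gives
\[
S_2 \lesssim \frac{(\frac{n}{2}-m-1)\,e^{-2}}{\sqrt{2\pi(n-2m-4)/4}},
\]
and replacing $m$ by $\frac{\sqrt n}{2}$ throughout turns the numerator into $\sim\frac{1}{2}(n-\sqrt n)$ and the denominator into $\sim\sqrt{\pi(n-\sqrt n)/2}$. The ratio then collapses to $e^{-2}\cdot\frac{1}{2}\sqrt{2/\pi}\cdot(n-\sqrt n)^{1/2}$, and recognizing $\frac{1}{2}\sqrt{2/\pi}=(2\pi)^{-1/2}$ yields exactly the claimed $S_2\lesssim e^{-2}(2\pi)^{-1/2}(n-\sqrt n)^{1/2}$. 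The main obstacle is the middle step: one must verify cleanly that the ceiling in $m$ does not spoil the limit and that the $O(\sqrt n)$ deviation condition of Lemma \ref{de Moivre-Laplace} is genuinely satisfied, since the entire asymptotic constant $e^{-2}$ hinges on $x$ landing exactly at $2$ rather than merely near it.
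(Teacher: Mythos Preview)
Your proposal is correct and follows essentially the same route as the paper: bound $S_2$ by the number of terms times the maximal summand from Lemma~\ref{upper bound for s2 summand}, verify the standardized deviation $(4m+6)/\sqrt{n-2m-4}\to 2$ so that Lemma~\ref{de Moivre-Laplace} applies with $p=1/2$, and simplify. Your discussion is in fact a bit more careful than the paper's in flagging the role of the ceiling in $m$ and explaining why $x$ lands exactly at $2$.
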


The last piece is $S_3$, which has an explicit, finite asymptotic. The proof relies on Lemma $\ref{de Moivre-Laplace}$.

\begin{proposition}
    We have $S_3\sim 2\Phi(1)-1\approx0.68$, where $\Phi$ is the cumulative distribution function of an $\mathcal{N}(0,1)$ random variable.
    \begin{proof}
        Using symmetry and re-indexing, we may rewrite
            \[S_3=2\sum_{i=\frac{n}{2}-m}^{\frac{n}{2}}\frac{\binom{n-1}{i-1}}{2^{n-1}}=2\sum_{k=\frac{n}{2}-m-1}^{\frac{n}{2}-1}\frac{\binom{n-1}{k}}{2^{n-1}}\]
        We check that
        \[\frac{n}{2}-m-1=\frac{n-1}{2}-\left\lceil\frac{\sqrt{n-1}}{2}-\frac{1}{2}\right\rceil-\frac{1}{2}=\frac{n-1}{2}+O(\sqrt{n})\]
        and this is clearly the farthest term from the central binomial coefficient. Lemma $\ref{de Moivre-Laplace}$ implies that the central limit theorem holds. By a simple algebraic manipulation of the endpoints of the sum, we obtain
        \[S_3\sim 2\,\Phi\left(\frac{2m+1}{\sqrt{n-1}}\right)-2\,\Phi\left(\frac{1}{\sqrt{n-1}}\right)\]
        \[=2\,\Phi\left(\frac{\lceil\sqrt{n-1}\rceil}{\sqrt{n-1}}\right)-2\,\Phi\left(\frac{1}{\sqrt{n-1}}\right)\]
        \[\to 2\,\Phi(1)-1\]
        by the continuity of $\Phi$.
    \end{proof}
\end{proposition}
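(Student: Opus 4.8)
The plan is to recognize $S_3$ as an interval probability for a $\mathrm{Binomial}(n-1,\tfrac12)$ random variable and then apply the de Moivre--Laplace estimate of Lemma~\ref{de Moivre-Laplace}. First I would exploit the symmetry $\binom{n-1}{j-1}=\binom{n-1}{n-j}$, which holds because $(j-1)+(n-j)=n-1$, to collapse the two binomial terms in the summand, and then reindex by $k=j-1$. After this reduction $S_3$ is written as $2\sum_{k=n/2-m-1}^{\,n/2-1}\binom{n-1}{k}/2^{n-1}$, and since $\binom{n-1}{k}/2^{n-1}$ is exactly the mass a $\mathrm{Binomial}(n-1,\tfrac12)$ variable $X$ assigns to $k$, the quantity $S_3$ becomes twice the probability $\Pr(a_n\le X\le b_n)$ that $X$ lands in a window of consecutive integers ending just below its mean $\mu=(n-1)/2$.

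Next I would locate and standardize the endpoints of this window using $\sigma=\sqrt{n-1}/2$. The upper endpoint $b_n=n/2-1$ sits a bounded distance below $\mu$, so its standardized value is $-1/\sqrt{n-1}\to 0$; the lower endpoint satisfies $\tfrac n2-m-1=\tfrac{n-1}{2}+O(\sqrt n)$, and with the choice $m=\lceil\sqrt{n-1}/2-1\rceil$ one has $2m+1\sim\sqrt{n-1}$, so its standardized value $-(2m+1)/\sqrt{n-1}$ tends to $-1$. The crucial structural point, which I would verify explicitly, is that the entire summation window has width $O(\sqrt n)=O(\sigma)$ and lies within $O(\sigma)$ of the mean; this is precisely the regime in which the Gaussian approximation is valid and the central binomial coefficient dominates the local behaviour.

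With the endpoints under control, I would invoke Lemma~\ref{de Moivre-Laplace} to pass from the binomial interval probability to the corresponding Gaussian integral, obtaining $S_3\sim 2\Phi\!\bigl(\tfrac{2m+1}{\sqrt{n-1}}\bigr)-2\Phi\!\bigl(\tfrac{1}{\sqrt{n-1}}\bigr)$. Letting $n\to\infty$ and using the continuity of $\Phi$ together with $\tfrac{2m+1}{\sqrt{n-1}}\to 1$ and $\tfrac{1}{\sqrt{n-1}}\to 0$ then yields $2\Phi(1)-2\Phi(0)=2\Phi(1)-1$, as claimed.

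The step I expect to be the main obstacle is the passage from the \emph{pointwise} local estimate of Lemma~\ref{de Moivre-Laplace} to the \emph{summed} (integral) statement, since the lemma as stated controls a single term $\binom{n}{k_n}p^{k_n}(1-p)^{n-k_n}$, whereas here one is adding $\Theta(\sqrt n)$ such terms. To make this rigorous I would either argue that the de Moivre--Laplace approximation holds uniformly for all $k$ in the window, so that summing the local estimates reproduces a Riemann sum converging to the integral of the standard normal density, or, more cleanly, appeal directly to the integral form of the central limit theorem for $X\sim\mathrm{Binomial}(n-1,\tfrac12)$ applied to the standardized interval $[a_n,b_n]$, which handles the interval probability in one stroke and removes any need to sum the local estimates by hand.
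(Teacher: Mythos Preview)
Your proposal is correct and follows essentially the same route as the paper: collapse the two binomial terms via $\binom{n-1}{j-1}=\binom{n-1}{n-j}$, reindex, recognize the sum as a $\mathrm{Binomial}(n-1,\tfrac12)$ interval probability in the $O(\sqrt{n})$ window about the mean, apply the normal approximation to reach $2\Phi\!\bigl(\tfrac{2m+1}{\sqrt{n-1}}\bigr)-2\Phi\!\bigl(\tfrac{1}{\sqrt{n-1}}\bigr)$, and let $n\to\infty$. Your explicit discussion of the pointwise-to-integral passage is a welcome addition, as the paper simply asserts that Lemma~\ref{de Moivre-Laplace} ``implies that the central limit theorem holds'' without elaborating.
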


Adding $S_1$, $S_2$, and $S_3$, we get the asymptotic in Theorem $\ref{No-feedback Asymptotic}$.
\begin{remark}
    We note that $\sqrt{\frac{2}{\pi}}(n-\sqrt{n})^{1/2}+0.68$ is an effective estimate for $E(n)$. Numerical evidence suggests that there is a tighter asymptotic bound for $S_2$. See the below table for some of our data, where the average number of correct guesses is the sum of the maximal probabilities.
\end{remark}

\begin{center}
\begin{tabular}{ | m{10em} | m{1cm} | m{1cm} | m{1cm} | m{1cm} |}
  \hline
  $n$ &  $10$ & $21$ & $33$ & $52$\\ 
  \hline
  Average number of correct guesses & $2.70$ & $3.88$ & $4.82$ & $6.00$\\
  \hline
  $\sqrt{\frac{2}{\pi}}(n-\sqrt{n})^{1/2}+0.68$ & $2.77$ & $3.91$ & $4.85$ & $6.02$\\
  \hline
  \end{tabular}
\end{center}

If the cards were shuffled perfectly (uniformly), one would expect to get exactly one card correct in a deck of $n$ cards. In a $52$-card deck, one expects to get $6$ cards right, indicating that the one-shelf shuffle is far from being perfect.

Although the no-feedback case is difficult to analyze, our asymptotic provides a precise way to estimate the expected reward for larger decks. We note that the rate of convergence seems quite fast and is worth studying.

\section{The Strategy with Complete Feedback}
\label{Section 5}
The complete-feedback case yields a more fruitful analysis. We develop an optimal complete-feedback guessing strategy for the one-time single-shelf shuffle described in Section $\ref{Section2}$. Our construction of this strategy allows for a simple computation of the expected reward. 

First, we give a general formulation of a guessing strategy with complete feedback. See \cite[1.4]{Liu2020} for a definition using maps of sequences. We define it in terms of the filtration generated by the cards that have already appeared, in order. This is natural because the information available to the player is increasing with respect to time. Immediately before card $j+1$ is guessed, the player knows which cards were in each of the $j$ previous positions. 

Let $r_j$ be the card appearing in position $j$ and let $\mathcal{F}_j$ be the $\sigma$-algebra generated by $\{r_1,r_2,...,r_j\}$ as an ordered set. Observe that $\mathcal{F}:=\{F_j\}_{j\geq 1}$ is a filtration. Set $\mathcal{F}_0=\varnothing$ and view $\{r\}:=\{r_1,r_2,\ldots,r_n\}$ as a process.
In general, we may define a guessing strategy $\{G\}:=\{g_1,g_2,\ldots,g_n\}$ as a process adjacent to $\{r\}$. As it is a strategy known to the player, $G$ should be $\mathcal{F}$-predictable. We demand that $G$ satisfies
\[g_{j+1}\in\{\text{arg}\max_{1\leq c\leq n}P(r_{j+1}=c\,|\,\mathcal{F}_j)\}\]
that is, $g_{j+1}$ maximizes the $\mathcal{F}_j$-conditional transitional probabilities for every $j$. For simplicity, we let $g_1$ follow the no-feedback guess. One verifies that choosing a $G$ according to the construction above defines an $\mathcal{F}$-predictable process. Note that $G$ may not be (conditionally) unique, but it must be predictable. That is, the player should know before the game is played which card he will guess in each possible scenario. For example, suppose that in position $4$, cards $7$ and $10$ appear with equal and maximal conditional probability. The player should know before the game is played that he will guess card $7$ if this scenenario comes up. 

Denote by $F^G(n)$ the expected reward with complete feedback. Constructing similarly to $E^G(n)$ from Section $\ref{Section 3}$, we have
\begin{equation}
\label{complete feedback construction}
F^G(n)=\sum_{j=1}^n P(g_j =r_j\,|\,\mathcal{F}_{j-1})
\end{equation}

Liu gave an equivalent definition with respect to the induced conditional distributions on elements of $S_n$ by shuffling algorithms in \cite[1.4]{Liu2020}. The proof of Liu's strategy for a one-time riffle shuffle relies on a recursive method. Once the first card is revealed, the remaining $n-1$ cards can be viewed as shuffled according to a conditional distribution. 

The one-time shelf shuffle yields a simpler result. The conditional distribution induced is simply a shifted version of the original one. More precisely, the information stored in $\mathcal{F}_{j-1}$ can be vast. Luckily, we only need $r_{j-1}$ (and $j-1$) to uniquely determine the conditional probabilities. This is a major difference from most other shuffles, including the one-time riffle-shuffle. We show this in Proposition $\ref{properties of r}$. 

It is possible to introduce a threshold after which the cards, $\{r\}$, become $\mathcal{F}$-predictable. Prior to this time, $\{r\}$ can be viewed as a (stopped) Markov chain. The authors in \cite[FDH]{FDH} alluded to this event as the first descent in the permutation of cards. We have an even more explicit characterization. Let
\[\tau^{n-1}=\inf_{j\geq 1}\{r_j=n-1\}\;\;\;\text{and}\;\;\;\tau^{n}=\inf_{j\geq 1}\{r_j=n\}\]
be the positions at which card $n-1$ and card $n$ occur, respectively. Note that the $\inf$ notation may be dropped as these cards both occur once. We use it as the reader may be familiar with it in the context of random walks. 

Using the notation $a\wedge b=\min(a,b)$, set $\tau=\tau^{n-1}\wedge\tau^{n}$. Observe that the event $\{\tau=m\}$ is the first time either card $n-1$ or $n$ is shown. According to the definition in \cite[PTE, 4.2]{Durrett}, $\tau$ is an $\mathcal{F}$-stopping time. Notice that $|\tau^n-\tau^{n-1}|=1$ a.s. using the proof of Lemma $\ref{last two rows are equal}$. 

To observe a few properties of the stopped process $r_{j\wedge\tau}$, we construct the following process that runs adjacently. Define the random sequence 
\[K_{j\wedge\tau}=([r_j]\,\backslash\,\{r_1,\,r_2,\,\ldots,\,r_j\})_{\text{rev}}=\{r_j-1,\,r_j-2,\,\ldots,r_{j-1}+1,\,r_{j-1}-1,\,\ldots,r_1+1,r_1-1,\,\ldots,\,1\}\]
For example, in a deck of size larger than $9$, if the first three cards, in order, are cards $2,\,4,\,7$, then $K_3=\{6,\,5,\,3,\,1\}$. We note that $r_{j\wedge\tau}$ is increasing and $K_{j\wedge\tau}$ is in descending order. 
\begin{lemma}
    We have $P(r_{j\wedge\tau}<r_{(j+1)\wedge\tau}\,|\,\tau>j)=1$ and, therefore, $K_{j\wedge\tau}$ is almost surely in descending order.
    \begin{proof}
        The first descent in the permutation of cards that appear is uniquely seen after card $n$ is shown. Prior to card $n$ being shown, the cards are in ascending order. After card $n$ is shown, the cards are in descending order. This is because card $n$ is the first card placed into the pile. Cards must be placed, from the bottom of the deck, on top or on the bottom of the pile. This preserves the relative order if placed on the top and reverses the relative order if placed on the bottom.
    \end{proof}
\end{lemma}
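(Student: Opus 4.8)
The plan is to reduce the probabilistic statement to a deterministic structural fact: for any outcome of the top/bottom decisions, the final pile read from top to bottom is strictly unimodal with its unique peak at card $n$. Once this is established, the lemma follows because, prior to $\tau$, only cards lying on the increasing side of the peak have been revealed. Throughout, the only randomness is in the placement decisions, so every conclusion will hold for each fixed outcome and hence with conditional probability $1$.

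First I would recall the mechanics from the single-shelf description in Section \ref{Section2}: cards are drawn from the bottom, hence in the decreasing label order $n, n-1, \ldots, 1$, and card $n$ is placed alone before any other card. Each later card is placed on the very top or very bottom of the current pile, so once $n$ is present no card is ever inserted between two existing cards. Consequently the cards placed on top constitute exactly the block above $n$ in the final pile, and those placed on the bottom constitute the block below $n$. I would then analyze each block separately. Because placement happens in strictly decreasing label order, among the top-block cards the last placed (smallest label) ends up highest and the first placed (largest label) sits just above $n$; reading from the top down to $n$ the labels strictly increase and all are $<n$. Symmetrically, among the bottom-block cards the first placed (largest label) sits just below $n$ and the labels strictly decrease going downward. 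This yields the unimodal shape, strictly increasing up to $n$ and strictly decreasing after it.

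With this structure in hand the conclusion is short. Since $n-1$ is the largest label in $\{1,\ldots,n-1\}$, it is the top-block card nearest $n$ or the bottom-block card nearest $n$, hence always adjacent to $n$ (this recovers the remark $|\tau^n-\tau^{n-1}|=1$ and is consistent with Lemma \ref{last two rows are equal}). The first of cards $n-1,n$ to be shown is therefore either $n$ itself, when $n-1$ lies below $n$ (so $\tau=\tau^n$ is the peak position), or $n-1$, when $n-1$ lies above $n$ (so $\tau=\tau^{n-1}$ is one step before the peak). In both cases every position $1,\ldots,\tau$ lies within the strictly increasing run that ends at the peak $n$. Thus on the event $\{\tau>j\}$ we have $j+1\le\tau$ and $r_j<r_{j+1}$ deterministically, giving $P(r_{j\wedge\tau}<r_{(j+1)\wedge\tau}\mid\tau>j)=1$. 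The assertion about $K_{j\wedge\tau}$ then follows: its defining listing records, block by block, the unseen labels in each gap $(r_{i-1},r_i)$ in descending order, and this listing is globally descending precisely because $r_1<r_2<\cdots<r_{j\wedge\tau}$, which we have just proved.

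I expect the unimodality step to be the main obstacle. The argument is elementary, but care is needed to justify rigorously that ``top or bottom only'' placements never interleave the two blocks, and to pin down the within-block orderings from the decreasing draw order; an induction on the number of placed cards, tracking the position of each new card relative to the fixed card $n$, is the cleanest way to make this precise. Everything after the structural claim is routine bookkeeping.
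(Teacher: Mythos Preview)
Your proposal is correct and follows the same approach as the paper: both arguments hinge on the observation that card $n$ is placed first and all subsequent cards land strictly above or strictly below it, producing a pile that is strictly increasing up to $n$ and strictly decreasing afterward. Your write-up is considerably more detailed---spelling out the top-block/bottom-block decomposition, the adjacency of $n-1$ and $n$, and the two cases $\tau=\tau^{n-1}$ versus $\tau=\tau^n$---whereas the paper dispatches the lemma in a few sentences, but the underlying idea is identical.
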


Equipped with this information, we discover that once card $n-1$ is seen, the remaining cards are in descending order a.s. We also see that, due to the cards being in ascending order prior to when card $n$ is seen, $r_{j\wedge\tau}$ is Markov. This allows us to compute the conditional transition probabilities explicitly, which have maxima $1/2$ and $1$ before and after $\tau$ is reached, respectively.

\begin{proposition}\label{properties of r}

    i.) The stopped process $r_{j\wedge\tau}$ is homogeneous and Markov. The transition probabilities of $r_j$ when both cards $n-1$ and $n$ have not been shown are 
\begin{equation}\label{conditionaltransition}
    P(r_{j+1}=i\,|\,r_j=k,\,\tau>j)=\frac{\binom{i-k-1}{0}+\binom{i-k-1}{n-k-1}}{2^{i-k}}=\frac{1_{\{k+1\leq i\leq n\}}+1_{\{i=n\}}}{2^{i-k}}
\end{equation}

ii.) $\tau-1$ has a $\text{Binomial}(n-2,\,1/2)$ distribution.

iii.) After the $n-1$st or $n$th card is shown, the cards are deterministic and their order is given uniquely by $K_{\tau}$, if $\tau=\tau^n$. If $\tau=\tau^{n-1}$, then $r_{\tau+1}=n$ a.s. and the remaining cards, in order, are given by $K_{\tau}$. That is,
\[P(r_{\tau+m}=i\,|\,\tau=\tau^{n})=1_{\{K_{\tau}^m=i\}}\]
and
\[P(r_{\tau+1}=n\,|\,\tau=\tau^{n-1})=1, \;\;\;\;P(r_{\tau+m+1}=i\,|\,\tau=\tau^{n-1})=1_{\{K_{\tau}^m=i\}}\]

\begin{proof}

    First we prove i.). Fix $j<\tau$ and $k$ such that $p_{k,j}>0$. Recall that cards go to the top and the bottom of the pile via a $\text{Bernoulli}(1/2)$ distribution. Let $N_k$ be the number of cards at the top up to and including when card $k$ is selected. Then, $N_k$ is $\text{Binomial}(k,\,1/2)$. Observe that $\{r_j=k\}=\{N_k=j\}$ as events (intersected with $\{j<\tau\}$). Also notice that the number of cards that have not gone to the top is $k-N_k$. So we are left with a shuffle of $n-(N_k+k-N_k)=n-k$ cards according to the distribution from Theorem $\ref{positionmatrix}$, with card $k+1$ at the top of the deck. This proves i.). 

    To prove ii.), we first notice that cards $n-1$ and $n$ always appear consecutively in the deck, with both orderings having probability $1/2$. Label this block of two cards $B$ and shuffle the remaining $n-2$ cards. Then $\{\tau=k\}$ if and only if $k-1$ cards have been selected, each with probability $1/2$, to go above $B$. Hence, 
    \[P(\tau=k)=\frac{\binom{n-2}{k-1}}{2^{k-1}}\]
    so that 
    \[P(\tau-1=k)=\frac{\binom{n-2}{k}}{2^k}\]
    showing ii.). 

    As for iii.), we note that after card $n$ is shown, the cards are almost surely in strictly decreasing order. This is because cards are drawn from the bottom of the deck. There is one unique arrangement of the remaining cards in decreasing order, namely, by removing the already selected cards from $[n]$ and reversing. This is exactly as in the construction of $K_{j\wedge \tau}$, proving iii.).
\end{proof}
\end{proposition}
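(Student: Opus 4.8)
The plan is to exploit the unimodal ``deque'' structure of the single-shelf shuffle: since card $n$ is placed first and every later card is pushed to the top or bottom of the growing pile, the final arrangement read from top to bottom increases up to card $n$ and then decreases, with cards $n-1,n$ forming an adjacent block at the peak, exactly as already used in Lemma \ref{last two rows are equal}. The only randomness is the sequence of independent fair coins deciding, for each of the cards $1,\dots,n-1$, whether it is pushed to the top or the bottom. I would translate each of the three claims into a statement about these coin flips.

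For part (i) I would first record the combinatorial meaning of the conditioning event. Drawing in the order $n,n-1,\dots,1$, a card $k\le n-2$ sits in position $j$ with $\tau>j$ exactly when $k$ is pushed to the top and precisely $j-1$ of the smaller-labelled cards $1,\dots,k-1$ are also pushed to the top; crucially, the pushes of the cards $k+1,\dots,n$ are left completely unconstrained by $\{r_j=k,\tau>j\}$. This is the key \emph{restart} observation: conditionally on $\{r_j=k,\tau>j\}$, the cards $\{k+1,\dots,n\}$ still form a fresh single-shelf shuffle (with $n$ as pivot and each of $k+1,\dots,n-1$ an independent fair coin), and the next revealed card $r_{j+1}$ is precisely the top card of this sub-shuffle on $n-k$ cards. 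Applying Theorem \ref{positionmatrix} to the first column of the $(n-k)$-card position matrix, after relabelling $k+1\mapsto 1,\dots,n\mapsto n-k$, then gives $P(r_{j+1}=i\mid r_j=k,\tau>j)=\bigl(\binom{i-k-1}{0}+\binom{i-k-1}{n-k-1}\bigr)2^{-(i-k)}$, which is the kernel in \eqref{conditionaltransition}. Since this depends only on $i-k$ and not on $j$, the stopped chain is both homogeneous and Markov.

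For part (ii) I would collapse the adjacent pair $\{n-1,n\}$ into a single block $B$ and observe that $\tau$ equals one plus the number of cards among $1,\dots,n-2$ that are pushed above $B$. As these are $n-2$ independent fair coins, $\tau-1\sim\mathrm{Binomial}(n-2,1/2)$ follows at once. For part (iii) I would invoke the descending-order lemma preceding the proposition: once the peak card $n$ has appeared, every later position is filled by a bottom-pushed card, and bottom-pushed cards are read off in strictly decreasing label order. Hence the tail is deterministic and must equal the unique decreasing arrangement of the remaining labels, which is exactly $K_\tau$; the two cases $\tau=\tau^{n}$ and $\tau=\tau^{n-1}$ differ only in whether card $n$ has already been revealed at time $\tau$, which accounts for the extra almost-sure guess $r_{\tau+1}=n$ in the latter.

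The main obstacle is part (i): care is needed because the chain $r_1,r_2,\dots$ runs \emph{opposite} to the drawing order (later positions hold larger, earlier-drawn labels), so the naive ``remove the revealed card and reshuffle the rest'' recursion must be set up on the cards $\{k+1,\dots,n\}$ rather than on the unseen-but-already-drawn small labels. Verifying that $\{r_j=k,\tau>j\}$ genuinely imposes no constraint on the pushes of $k+1,\dots,n$, so that the restart is an honest conditional single-shelf shuffle, is the delicate point; once that independence is in hand, the homogeneity, the explicit transition kernel, and the convenient cancellation of the $\binom{k-1}{j-1}$ factor in the conditional ratio all follow cleanly.
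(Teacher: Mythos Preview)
Your proposal is correct and follows essentially the same route as the paper. For part (i) both you and the paper reduce the conditional law of $r_{j+1}$ given $\{r_j=k,\tau>j\}$ to a fresh single-shelf shuffle on the $n-k$ cards $\{k+1,\dots,n\}$ and then read off the first-column probabilities from Theorem~\ref{positionmatrix}; you are somewhat more explicit than the paper about the independence step (that the coin flips for $k+1,\dots,n-1$ are unconstrained by the conditioning), which is exactly the point the paper compresses into the sentence ``So we are left with a shuffle of $n-k$ cards \dots''. Parts (ii) and (iii) are handled identically in both arguments, via the block $B=\{n-1,n\}$ and the descending-tail observation.
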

We now prove Theorem $\ref{complete feedback optimal strategy}$ by defining a strategy $\mathbf{G}$ and showing it is optimal. 

\begin{proposition}\label{best strategy with feedback}
    Let $\mathbf{G}=\{g_j\}_{j=1}^n$ be the following process.
    
    i.) $g_1=1$ a.s.
    
    ii.) $g_{j+1}=r_j +1$, $\tau>j$. If $r_j=n-2$, then $g_{j+1}=n-1$ or $n$.
    
    iii.) If $\tau=\tau^{n-1}$, then $g_{\tau+1}=n$ a.s. and $g_{\tau+m+1}=K_{\tau}^m$ a.s. Otherwise, if $\tau=\tau^{n}$, then $g_{\tau+m}=K_{\tau}^m$ a.s.
    Then, $\mathbf{G}$ maximizes $F^G$ for every $n$. 
    \begin{proof}
        For the first card, we resort to the no-feedback case. This proves i.)
        By Proposition $\ref{properties of r}$ i.), we have 
        \begin{equation}
        \label{Complete Feedback Maxima}   P(r_{j+1}=r_j+1\,|\,r_j,\,\tau>j)=\frac{1}{2}
        \end{equation}
        for every $1\leq j\leq n-2$. Applying a pigeonhole argument as in Proposition $\ref{first card}$, we deduce ii.). Part iii.) follows from Proposition $\ref{properties of r}$ iii.), as the probabilities are all $1$ and thus must be the unique maxima.
    \end{proof}
    \end{proposition}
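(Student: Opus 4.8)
The plan is to confirm, regime by regime, that each clause of $\mathbf{G}$ selects a conditional argmax $g_j \in \text{arg}\max_{1\le c\le n} P(r_j = c \mid \mathcal{F}_{j-1})$, which is exactly the defining requirement of a complete-feedback strategy. The reason this delivers optimality of $F^G$ in \eqref{complete feedback construction} is a decoupling observation worth stating up front: the feedback released after position $j$ is the true card $r_j$, which is independent of the player's guesses, so the $j$-th summand $P(g_j = r_j \mid \mathcal{F}_{j-1})$ depends on the strategy only through the $\mathcal{F}_{j-1}$-measurable choice of $g_j$. Hence maximizing each summand pathwise maximizes the entire sum, and it suffices to treat the positions one at a time.

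For the first card (clause i.)) we have $\mathcal{F}_0 = \varnothing$, so $P(r_1 = c \mid \mathcal{F}_0) = p_{c,1}$ and the question reduces to the no-feedback problem already solved in Proposition~\ref{first card}, forcing $g_1 = 1$. For positions before $\tau$ (clause ii.)) I would invoke the homogeneous Markov property and the explicit conditional law in Proposition~\ref{properties of r} i.), and then perform the elementary maximization of $i \mapsto (1_{\{k+1\le i\le n\}} + 1_{\{i=n\}})/2^{\,i-k}$ over its support $\{k+1,\dots,n\}$ with $k = r_j \le n-2$. On this range the values form the strictly decreasing sequence $\tfrac12, \tfrac14, \dots$ except that the extra indicator at $i=n$ doubles the last term, making $i = n-1$ and $i = n$ tie at $2^{-(n-1-k)}$. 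The maximum is therefore $\tfrac12$, attained uniquely at $i = r_j + 1$ when $r_j \le n-3$ and attained at both $n-1$ and $n$ when $r_j = n-2$; this is precisely clause ii.), and \eqref{conditionaltransition} together with a pigeonhole remark in the spirit of Proposition~\ref{first card} certifies that $\tfrac12$ is the genuine column maximum.

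For the positions at and after $\tau$ (clause iii.)) I would quote Proposition~\ref{properties of r} iii.): the moment card $n-1$ or $n$ is shown, the remaining cards are deterministic and listed in descending order by $K_\tau$, so every relevant conditional probability equals $1$. A probability-one event is trivially the unique argmax, so the optimal guesses are forced to coincide with the actual cards. It remains only to reconcile the indexing in the two cases: when $\tau = \tau^{n}$ the tail is read off $K_\tau$ directly, whereas when $\tau = \tau^{n-1}$ the almost-sure relation $|\tau^{n} - \tau^{n-1}| = 1$ places card $n$ at position $\tau+1$ before the descending tail begins, which is exactly why clause iii.) inserts $g_{\tau+1} = n$ and then shifts the $K_\tau$ indices by one.

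The main obstacle is organizational rather than analytic: once Proposition~\ref{properties of r} is available, the per-position optimization is a finite, elementary computation, and the real care lies in exhibiting $\mathbf{G}$ as a genuinely $\mathcal{F}$-predictable process across the tie at $r_j = n-2$ and across the case split at $\tau$, together with making explicit that the greedy, term-by-term maximization is legitimate precisely because the guesses do not influence the feedback.
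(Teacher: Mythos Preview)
Your proposal is correct and follows essentially the same three-part structure as the paper's proof: reduce clause i.) to the no-feedback result, use Proposition~\ref{properties of r} i.) plus a pigeonhole/maximization argument for clause ii.), and invoke Proposition~\ref{properties of r} iii.) for clause iii.). You are simply more explicit than the paper---carrying out the maximization of \eqref{conditionaltransition} in detail, spelling out the decoupling reason why per-position greediness is globally optimal, and reconciling the indexing in the $\tau=\tau^{n-1}$ versus $\tau=\tau^n$ cases---but none of this constitutes a different route.
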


This completes the proof of Theorem $\ref{complete feedback optimal strategy}$. 

\begin{remark}
    The strategy in Theorem $\ref{complete feedback optimal strategy}$ was conjectured in \cite[FDH]{FDH}. We note that our strategy is equivalent because the first descent is observed a.s. immediately after card $n$ appears, and then the cards are in descending order.
\end{remark}

\section{Expected Reward With Complete Feedback}
\label{Section 6}
We prove that the expected reward using the optimal strategy $\mathbf{G}$ from Proposition $\ref{best strategy with feedback}$ for a one-time single-shelf shuffle is $3n/4$.  We can compute the expectation by conditioning on $\tau$. This is far less involved than summing through the $\mathcal{F}_{j-1}$-conditional expectations as in equation $\ref{complete feedback construction}$. To prove Theorem $\ref{complete feedback expected reward}$, we calculate $F^{G}(n)$ through this method. Our proof relies on two concepts. First, $\tau$ is a shifted binomial random variable by Proposition $\ref{properties of r}$ ii.). This allows us to use a variant of the law of iterated expectations. Second, the construction of $\mathbf{G}$ allows us to break the expectation into two parts. Recall the definition of the expected reward from equation $\ref{complete feedback construction}$. We compute
\[F^{\mathbf{G}}(n)=\sum_{j=1}^n E\left[\sum_{m=1}^n1_{\{\tau=m\}}1_{\{g_j=r_j\}}\,\bigg|\,\mathcal{F}_{j-1}\right]\]
\begin{equation}
\label{separating expectations}
=E\sum_{j=1}^{\tau}1_{\{g_j=r_j\}}+E\sum_{j=\tau+1}^n1_{\{g_j=r_j\}}
\end{equation}
where we used that $r_{j\wedge\tau}$ is Markov.
By inserting a conditional expectation into the first sum, we have 
\begin{equation}
\label{insert conditional expectation}
E\sum_{j=1}^{\tau}1_{\{g_j=r_j\}}=E\left[\sum_{j=1}^{\tau}E [1_{\{g_j=r_j\}}\,|\,\tau>j]\right]
\end{equation}
From equation $\ref{Complete Feedback Maxima}$ and Proposition $\ref{best strategy with feedback}$ i.), we have 
\[E[1_{\{g_j=r_j\}}\,|\,\tau>j]=E[1_{\{r_{j-1}+1=r_j\}}\,|\,\tau>j]=\frac{1}{2}\]
so that
\[E\sum_{j=1}^{\tau}1_{\{g_j=r_j\}}=\frac{E[\tau]}{2}=\frac{n}{4}\]
by Proposition $\ref{properties of r}$ ii.).

As for the second sum, notice that $r_{
\tau+m}$ is $\mathcal{F}_\tau$-predictable for every $1\leq m\leq n-\tau$. In particular, the cards are deterministic and are in descending order. Using the formal construction from Proposition $\ref{best strategy with feedback}$ iii.) and rewriting $j=\tau+m$ for some $m$, we have that $1_{\{g_j=r_j\}}=1$ a.s. conditional on $j>\tau$. Inserting a conditional expectation as in equation $\ref{insert conditional expectation}$ gives
\[E\sum_{j=\tau+1}^n1_{\{g_j=r_j\}}=E\sum_{j=\tau+1}^n E[1_{\{g_j=r_j\}}\,|\,\tau<j]\]
\[=E\sum_{j=\tau+1}^n1=E[n-\tau]=n-\frac{n}{2}=\frac{n}{2}\]
Putting both sums together, we have $F^{\mathbf{G}}(n)=3n/4$ as desired. The proof of Theorem $\ref{complete feedback expected reward}$ is complete.

\begin{remark}
    The expectation we find here agrees with the data found in \cite[FDH]{FDH}. They found that the expected number of correct guesses in a deck of $52$ cards was $39$.
\end{remark}

For the general case of an $m$-shelf shuffle, we have the following conjecture. Our conjecture is for the case when the ratio of cards to shelves is not too small. This is applicable in most practical cases, including in casinos with a $52$-card deck and ten shelves.

\begin{conjecture}
    Let $H_{2m}$ be the $2m$th harmonic number. Suppose a deck of $n$ cards is given an $m$-shelf shuffle, and $n/m$ is not too small. Then, the strategy, call it $\mathbf{G}$, discussed in \cite{FDH} can be shown to be optimal in a high-probability case. Let $F^{\mathbf{G}}(n,m)$ be the expected number of correct guesses. We have, when $n/m$ is not too small,
    \[F^{\mathbf{G}}(n,m)\approx\frac{n}{2m}H_{2m}=\frac{n}{2m}\sum_{k=1}^{2m}\frac{1}{k}\]
\end{conjecture}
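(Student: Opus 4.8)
The plan is to generalize the stopping-time decomposition that proved Theorem \ref{complete feedback expected reward} from two monotone runs to $2m$ of them. The starting point is a structural description of the output of an $m$-shelf shuffle: when a card is drawn it selects one of the $m$ shelves uniformly and then a side (top or bottom) with probability $1/2$, so each card independently receives one of $2m$ equally likely \emph{placement labels} $\ell\in\{1,\dots,2m\}$, with $\ell=2(s-1)+\varepsilon$ encoding shelf $s$ and side $\varepsilon$. Exactly as in the single-shelf analysis of Section \ref{Section2}, the cards carrying a fixed label form a monotone block in the pressed deck (increasing for top labels, decreasing for bottom labels), and these $2m$ blocks appear in the fixed order $1,2,\dots,2m$; the single-shelf case is $2m=2$. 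I would record this as a lemma, citing the equivalence between shelf shuffles and riffle shuffles in \cite{FDH}, and remark that although the even blocks are decreasing (so that this is a \emph{signed} $2m$-shuffle rather than a Gilbert--Shannon--Reeds shuffle), the direction inside a block will not affect the count.

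Next I would handle optimality and the per-block success probability. Optimality of the greedy strategy $\mathbf{G}$ is immediate from full feedback, exactly as in the discussion after equation \eqref{complete feedback construction}: the information $\mathcal{F}_{j-1}$ does not depend on past guesses, so $F^{G}$ is maximized termwise by guessing an $\mathcal{F}_{j-1}$-conditional mode at every position, and this coincides with the strategy of \cite{FDH}. The heart of the computation is the following generalization of Proposition \ref{properties of r}. Suppose the player is revealing the interior of block $k$: all cards of labels $1,\dots,k-1$ have already appeared, so every unseen card carries a label that is uniform on $\{k,\dots,2m\}$ by independence of the labels. For an increasing block the next card is the smallest label-$k$ card exceeding the current card $c_-$, the conditional mode is $w$, the smallest unseen card above $c_-$, and one computes
\[
P(r_{j+1}=w\mid \mathcal{F}_{j})=P(\ell_w=k\mid \mathcal{F}_{j})=\frac{1}{2m-k+1},
\]
with a mirror-image statement and the same probability for decreasing blocks. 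In particular the final block $k=2m$ gives probability $1$, recovering the ``free'' descending tail of Theorem \ref{complete feedback optimal strategy}.

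The expected reward is then assembled block by block. Writing $N_k$ for the number of label-$k$ cards, we have $N_k\sim\mathrm{Binomial}(n,1/(2m))$ and $E[N_k]=n/(2m)$. Introducing a conditional expectation inside each block as in equation \eqref{insert conditional expectation} and using that the per-step conditional success probability is the constant $\tfrac{1}{2m-k+1}$ throughout the interior of block $k$, the tower property yields an expected contribution $\tfrac{1}{2m-k+1}\,E[N_k]$ from block $k$ up to a bounded correction. Summing and reindexing $j=2m-k+1$,
\[
F^{\mathbf{G}}(n,m)=\sum_{k=1}^{2m}\frac{E[N_k]}{2m-k+1}+O(m)=\frac{n}{2m}\sum_{j=1}^{2m}\frac{1}{j}+O(m)=\frac{n}{2m}H_{2m}+O(m),
\]
which is the claimed asymptotic as soon as $n/m\to\infty$.

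The main obstacle, and the source of the ``high-probability'' and ``$n/m$ not too small'' hypotheses, is the $O(m)$ term and the justification of the phrase ``interior of block $k$.'' The conditional-mode computation assumes the player knows which block she occupies and recognizes when a block is exhausted; at each of the $2m$ transitions she may instead continue predicting within the old block and waste guesses until a descent or an out-of-place value betrays the boundary. I would show that each boundary is detected after only $O(1)$ wasted guesses with overwhelming probability, so the total boundary loss is $O(m)$ (up to logarithmic factors from the harmonic tail), negligible against $\tfrac{n}{2m}H_{2m}$ precisely when $n/m$ is large. Making this detection argument uniform, that is, controlling the small-probability events where some $N_k$ is atypically short or a transition is invisible (which is where the binomial concentration of the $N_k$ enters), is the delicate step; everything else parallels the single-shelf proof, and it is exactly this boundary analysis that forces an approximation rather than the exact identity of Theorem \ref{complete feedback expected reward}.
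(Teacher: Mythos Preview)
Your approach is essentially the paper's own heuristic: decompose the output into $2m$ monotone blocks indexed by the placement label, argue that inside block $k$ the conditional success probability for the next guess is $(2m-k+1)^{-1}$, multiply by the expected block size $E[N_k]=n/(2m)$, and sum to obtain $\tfrac{n}{2m}H_{2m}$. Note that in the paper this statement is a \emph{conjecture} supported only by a short informal heuristic, not a proof; you go somewhat further than the paper by making the $O(m)$ boundary-detection correction explicit and by flagging the binomial concentration of the $N_k$ as the residual technical issue, but the underlying decomposition and the key per-block probability computation are identical.
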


We reach this conjecture with the following heuristic argument. After shuffling, suppose the deck is cut into $2m$ labeled piles, some possibly empty, with each pile representing the labeling each card was given as in the description in \cite[FDH]{FDH}. The average size of each pile is $n/2m$. If this quantity is greater than $2$, then the average pile should look like a monotone sequence to the player. One can outline an argument that the general strategy discussed in \cite[FDH]{FDH} is optimal when the piles are monotone. In the $k$th pile, the probability that the next available (monotone) card is in the next position is simply the probability that it was assigned to shelf $k$, conditional on it not being in the previous $k-1$ shelves, i.e. $(2m-k+1)^{-1}$. So, essentially, the distribution of correct guesses is akin to $n/2m$ copies of a $2m$-card deck each shuffled uniformly and separately. See Diaconis and Fulman \cite{MathofShufflingCards} for a description of this case. Summing over the $k$ piles and using the fact that the average size of each pile is $n/2m$, we obtain, after re-indexing, the conjecture. See the table of values below for some data from \cite[FDH]{FDH} along with our estimate. Note that the data from \cite[FDH]{FDH} is the sample expected value which corresponds to the definition which maximizes the conditional probabilities. In particular, it is potentially more accurate than data using the strategy $\mathbf{G}$ from their conjecture. Again, when $n/2m$ is too small, our estimate is not good.
\begin{center}
\begin{tabular}{ | m{10em} | m{1.2cm} | m{1.2cm} | m{1.2cm} | m{1.2cm} | m{1.2cm} | m{1.2cm} |}
  \hline
  $(n,m)$ &  $(52,1)$ & $(52,2)$ & $(52,4)$ & $(52,10)$ & $(52,20)$ & $(52,40)$\\ 
  \hline
  Average number of correct guesses & $39$ & $27$ & $17.6$ & $9.3$ & $6.2$ & $4.7$\\
  \hline
  $\frac{n}{2m}H_{2m}$ & $39$ & $27.08$ & $17.67$ & $9.35$ & $5.56$ & $3.23$\\
  \hline
  \end{tabular}
\end{center}
In our future work, we hope to make this argument precise.

\newpage
\section{Appendix}
\label{Appendix}
We give full conjectured optimal no-feedback guessing strategy. This strategy is based off of data obtained from generating the position matrices from Theorem $\ref{positionmatrix}$ for decks of various sizes $n$. The cases $j=1$, $j=2$, and $j$ the median card(s) of the deck, are proven in Section $\ref{Section 3}$ to be optimal. Let
\begin{equation}\label{alphacases}
\alpha_n=\begin{cases}
    n/3,& n\equiv0\mod{3}\\
    \lfloor n/3\rfloor, & n\equiv 1\mod{3}\\
    \lceil n/3\rceil, & n\equiv 2\mod{3}
\end{cases}
\end{equation}

The best strategy is fully characterized by the below cases for the first half of the deck. By Proposition $\ref{guessing strategy symmetry}$, this also gives the strategies for the second half of the deck, in reverse.

Suppose $n$ is odd. If there exists a nonnegative integer $m$ such that $n=(2m)^2+1$, i.e. $n\in S:=\{1,5,17,37,65,101,...\}$, then the best strategy is given below.
\begin{center}
\begin{tabular}{ | m{10em} | m{3cm} |} 
  \hline
  Position range & Best guess(es) at position $j$:  \\ 
  \hline
  $1\leq j\leq \alpha_n$ &  $2j-2,\,2j-1$\\ 
  \hline
  $\alpha_n+1\leq j\leq \frac{n+1}{2}-m-1$ & $2j-1$\\
  \hline
  $j=\frac{n+1}{2}-m$& $n-3,\,n-2,\,n-1,\,n$ \\ 
  \hline
  $\frac{n+1}{2}-m+1\leq j\leq \frac{n+1}{2}$& $n-1,\,n$\\
  \hline
\end{tabular}
\end{center}

Suppose $n$ is odd and $n\notin S$. Then, there exists a unique nonnegative integer $m$ such that $(2m)^2+1<n<(2(m+1))^2+1$. The best strategy is given below.

\begin{center}
\begin{tabular}{ | m{10em} | m{3cm} |} 
  \hline
  Position range & Best guess(es) at position $j$ \\ 
  \hline
  $1\leq j\leq \alpha_n$ &  $2j-2,\,2j-1$\\ 
  \hline
  $\alpha_n+1\leq j\leq \frac{n+1}{2}-m-1$ & $2j-1$\\
  \hline
  $\frac{n+1}{2}-m\leq j\leq \frac{n+1}{2}$& $n-1,\,n$\\
  \hline
\end{tabular}
\end{center}
Suppose $n$ is even. If there exists a nonnegative integer $m$ such that $n=(2m+1)^2+1$, i.e. $n\in S':=\{2,10,26,50,82,122,...\}$, then the best strategy is given below. 

\begin{center}
\begin{tabular}{ | m{10em} | m{3cm} |} 
  \hline
  Position range & Best guess(es) at position $j$ \\ 
  \hline
  $1\leq j\leq \alpha_n$ &  $2j-2,\,2j-1$\\ 
  \hline
  $\alpha_n+1\leq j\leq \frac{n}{2}-m-1$ & $2j-1$\\
  \hline
  $j=\frac{n}{2}-m$& $n-3,\,n-2,\,n-1,\,n$ \\ 
  \hline
  $\frac{n}{2}-m+1\leq j\leq \frac{n}{2}$& $n-1,\,n$\\
  \hline
\end{tabular}
\end{center}
Suppose $n$ is even and $n\notin S'$. Then, there exists a unique nonnegative integer $m$ such that $(2m+1)^2+1<n<(2(m+1)+1)^2+1$. The best strategy is given below.
\begin{center}
\begin{tabular}{ | m{10em} | m{3cm} |}
\hline
Position range & Best guess(es) at position $j$ \\ 
  \hline
  $1\leq j\leq \alpha_n$ &  $2j-2,\,2j-1$\\ 
  \hline
  $\alpha_n+1\leq j\leq \frac{n}{2}-m-1$ & $2j-1$\\
  \hline
  $\frac{n}{2}-m\leq j\leq \frac{n}{2}$& $n-1,\,n$\\
  \hline
  \end{tabular}
\end{center}

\newpage
\providecommand{\bysame}{\leavevmode\hbox to3em{\hrulefill}\thinspace}
\providecommand{\MR}{\relax\ifhmode\unskip\space\fi MR }
\providecommand{\MRhref}[2]{%
  \href{http://www.ams.org/mathscinet-getitem?mr=#1}{#2}
}
\providecommand{\href}[2]{#2}

Department of Mathematics, University of Southern California, Los Angeles, CA 90089-2532, USA

\textit{Email address:} \href{mailto:ajclay@usc.edu}{ajclay@usc.edu} 

\begin{thebibliography}{10}

\bibitem{ShufflingAndStoppingTimes}
D.~Aldous and P.~Diaconis, \emph{Shuffling cards and stopping times}, Amer. Math. Monthly \textbf{93} (1986), 333--348.

\bibitem{Ciucu}
M.~Ciucu, \emph{No-feedback card guessing for dovetail shuffles}, Ann. Appl. Prob. \textbf{8} (1998), 1251--1269.

\bibitem{GroupRepsinProb}
P.~Diaconis, \emph{Group representations in probability and statistics}, vol.~11, Institute of Mathematical Statistics Lecture Notes - Monograph Series, 1988.

\bibitem{MathofShufflingCards}
P.~Diaconis and J.~Fulman, \emph{The mathematics of shuffling cards}, AMS, 2023.

\bibitem{FDH}
P.~Diaconis, J.~Fulman, and S.~Holmes, \emph{Analysis of casino shelf-shuffling machines}, Ann. Appl. Prob. \textbf{23} (2013), 1692--1720.

\bibitem{Diaconis1981}
P.~Diaconis and R.~Graham, \emph{The analysis of sequential experiments with feedback to subjects}, Ann. Statist. \textbf{9} (1981), 3--23.

\bibitem{CardGuessingWithPartialFeedback}
P.~Diaconis, R.~Graham, X.~He, and S.~Spiro, \emph{Card guessing with partial feedback}, Combinatorics, Prob. and Computing \textbf{31} (2021), 1--20.

\bibitem{GuessingAboutGuessing}
P.~Diaconis, R.~Graham, and S.~Spiro, \emph{Guessing about guessing: Practical strategies for card guessing with feedback.}, The Amer. Math. Monthly (2022), 1--16.

\bibitem{Durrett}
R.~Durrett, \emph{Probability: Theory and examples, version 5}, 2019.

\bibitem{HeOttolini}
J.~He and A.~Ottolini, \emph{Card guessing and the birthday problem for sampling without replacement}, Ann. Appl. Prob. \textbf{33} (2023), 5208--5232.

\bibitem{Kraitchik}
M.~Kraitchik, \emph{Mathematical recreations}, W.W. Norton, 1942.

\bibitem{LPW}
D.~Levin, Y.~Peres, and E.~Wilmer, \emph{Markov chains and mixing times, second edition}, AMS, 2017.

\bibitem{Liu2020}
P.~Liu, \emph{On card guessing game with one time riffle shuffle and complete feedback}, Discr. Appl. Math. \textbf{288} (2021), 270--278.

\bibitem{ZiepiNie}
Z.~Nie, \emph{The number of correct guesses with partial feedback},  (2022), Preprint: arXiv:2212.08113.

\bibitem{OttoliniSteinerberger}
A.~Ottolini and S.~Steinerberger, \emph{Guessing cards with complete feedback}, Advances in Appl. Math. \textbf{150} (2023).

\end{thebibliography}
\end{document}